\def\Proof{\par\noindent{\em Proof of Theorem}}
\newtheorem{theorem}{Theorem}
\newtheorem{lemma}[theorem]{Lemma}
\theoremstyle{definition}
\newtheorem{definition}{Definition}
\newtheorem{conjecture}[theorem]{Conjecture}
\newcommand{\A}{\mathcal{A}}
\newcommand{\I}{\mathcal{I}}
\title{Odd Hadwiger's conjecture for the complements of Kneser graphs}
\author[1]{Meirun Chen}
\author[2]{Reza Naserasr}
\author[3]{Lujia Wang}
\author[4]{Sanming Zhou}
\affil[1]{\small Xiamen University of Technology, Xiamen, China.  {Email: \texttt{mrchen@xmut.edu.cn}}}
\affil[2]{\small Université Paris Cité, CNRS, IRIF, F-75013, Paris, France. {Email: \texttt{reza@irif.fr}}}
\affil[3]{\small Zhejiang Normal University, Jinhua, China. {Email: \texttt{ljwang@zjnu.edu.cn}}}
\affil[4]{School of Mathematics and Statistics, The University of Melbourne, Parkville, VIC 3010,  Australia. {Email: \texttt{sanming@unimelb.edu.au}}}
\begin{document}
	
	\maketitle

	\begin{abstract}
A generalization of the four-color theorem, Hadwiger's conjecture is considered as one of the most important and challenging problems in graph theory, and odd Hadwiger's conjecture is a strengthening of Hadwiger's conjecture by way of signed graphs. In this paper, we prove that odd Hadwiger's conjecture is true for the complements $\overline{K}(n,k)$ of the Kneser graphs $K(n,k)$, where $n\geq 2k \ge 4$. This improves a result of G. Xu and S. Zhou (2017) which states that Hadwiger's conjecture is true for this family of graphs. Moreover, we prove that $\overline{K}(n,k)$ contains a 1-shallow complete minor of a special type with order no less than the chromatic number $\chi(\overline{K}(n,k))$, and in the case when $7 \le 2k+1 \le n \le 3k-1$ the gap between the odd Hadwiger number and chromatic number of $\overline{K}(n,k)$ is $\Omega(1.5^{k})$. 
	\end{abstract}

\section{Introduction} 
\label{sec:int}
	
A graph $H$ is said to be a \emph{minor} of a graph $G$ if it can be obtained from $G$ by a series of operations composed of (i) deleting a vertex, (ii) deleting an edge, and (iii) contracting an edge. In such a case $G$ is said to contain an \emph{$H$-minor}. Otherwise $G$ is said to be \emph{$H$-minor-free}. An $H$-minor of $G$ can be thought of as a family of pairwise vertex-disjoint connected subgraphs of $G$, each called a \emph{bag}, such that the graph obtained from $G$ by deleting all vertices outside these subgraphs and contracting all edges within each subgraph is isomorphic to $H$. The characterization of planar graphs as $K_5$- and $K_{3,3}$-minor-free graphs has inspired the study of relations between colorings and minors of graphs. In this regard Wagner~\cite{kW37} proved that if the four color theorem holds, then every $K_5$-minor-free graph is $4$-colorable. This motivated Hadwiger to conjecture that every $K_{t}$-minor-free graph is $(t-1)$-colorable. The \emph{Hadwiger number} of a graph $G$, denoted by $h(G)$, is the largest $t$ such that $K_t$ is a minor of $G$. As usual, we use $\chi(G)$ to denote the chromatic number of $G$. Hadwiger's conjecture can be stated as follows.
	
	\begin{conjecture}
		[Hadwiger's conjecture] 
		\label{Conj:Hadwiger}
		Every graph $G$ satisfies $\chi(G) \leq h(G)$.
	\end{conjecture}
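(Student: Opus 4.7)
The final statement is Hadwiger's Conjecture itself, one of the deepest open problems in graph theory; a complete proof is beyond what any current technique can deliver, so I can only sketch a plan in the spirit of the partial results in the literature.

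The plan is to proceed by induction on $t$ by considering a minimal counterexample $G$, i.e., a graph with $\chi(G) = t$ that has no $K_t$-minor and that minimizes $|V(G)|$. Standard arguments then show that $G$ must be $t$-chromatic-critical, so $\delta(G) \geq t-1$, and that $G$ must be highly connected. One would then try to build an explicit $K_t$-minor by choosing a well-placed vertex $v$, removing it to obtain a graph of chromatic number at least $t-1$, applying induction to extract a $K_{t-1}$-minor, and then attaching $v$ to every branch set using the high minimum degree; the challenge is to arrange the $t-1$ branch sets so that $v$ ends up adjacent to all of them simultaneously.

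For small $t$, I would appeal to the known cases: $t \leq 4$ is elementary, $t=5$ was reduced by Wagner to the Four Color Theorem, and $t=6$ was similarly reduced to the Four Color Theorem by Robertson, Seymour, and Thomas by showing that a minimum counterexample must contain an apex vertex whose removal leaves a planar graph. For general $t$, the natural heavy machinery is the graph minor structure theorem of Robertson and Seymour, which describes $K_t$-minor-free graphs as clique-sums of pieces that nearly embed in surfaces of bounded genus with a bounded number of apex vertices and vortices; combining this with Heawood-type bounds on the chromatic number of graphs embeddable in a fixed surface yields a bound of the form $\chi(G) = O(t \sqrt{\log t})$ for $K_t$-minor-free $G$ (Norin, Postle, and Song; Delcourt and Postle).

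The main obstacle is exactly the gap between this $O(t \sqrt{\log t})$ bound and the conjectured $t - 1$: closing it for arbitrary $t$ appears to require an essentially new idea tying density of minors to chromatic number much more tightly than current Kostochka--Thomason-type extremal results allow. Of course, for the results actually claimed in this paper one does not need the full conjecture; the point is to verify the odd analogue on the concrete family $\overline{K}(n,k)$, where the explicit combinatorial description of the vertices as $k$-subsets lets one construct odd complete minors of the required size directly, bypassing the general structural machinery entirely.
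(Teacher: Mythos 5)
This statement is Hadwiger's conjecture itself, which the paper states only as background and does not (and could not) prove; there is no ``paper's own proof'' to compare against. You correctly recognize this and decline to claim a proof, which is the right call. Your survey of the known partial results is accurate: the cases $t\le 4$ are elementary, $t=5$ reduces to the Four Color Theorem via Wagner, $t=6$ is Robertson--Seymour--Thomas, and the current best general bound on the chromatic number of $K_t$-minor-free graphs is $O(t\sqrt{\log t})$ via Kostochka--Thomason-type extremal results refined by Norin--Postle--Song and Delcourt--Postle. You also correctly identify that the paper sidesteps the general conjecture entirely and instead verifies the stronger odd variant on the concrete family $\overline{K}(n,k)$ by explicit construction of strongly $1$-shallow complete minors, which is precisely what Sections 3 and 4 do. Nothing to correct here; just be aware the grading target is a conjecture, not a theorem of the paper.
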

	
	This conjecture is considered as one of the most central problems in graph theory. It has been proved for graphs with $\chi(G) \leq 6$ (see~\cite{RST93} for the proof when $\chi(G) = 6$). In the general case, a common approach towards establishing Hadwiger's conjecture is to prove good upper bounds on the chromatic number of $K_t$-minor-free graphs in terms of $t$; see~\cite{kK09} and the references therein for results obtained by using this approach. Since it is challenging to prove Hadwiger's conjecture in its general form, it is natural to attempt it for special classes of graphs; see, for example, \cite{bc09, CIZ19, ll07, RST93, wxz, xz16} for several results along this direction. 
	
	Another venue of research is to consider possible strengthenings of Hadwiger's conjecture by way of signed graphs. The importance of this direction lies in the following algorithmic consideration. Given a positive integer $t$, it can be decided in polynomial time if an input graph $G$ has a $K_t$-minor or not. Upon receiving a ``NO'' answer for an input graph $G$, if Hadwiger's conjecture is verified for the class of $K_t$-minor-free graphs, then the existence of a $(t-1)$-coloring of $G$ is assured, whereas the decision problem of deciding whether a general graph is $(t-1)$-colorable is NP-complete for any fixed $t\geq 4$. As $K_t$-minor-free graphs have bounded average degree, this approach, unfortunately, applies only to families of sparse graphs. For example, $K_3$-minor-free graphs are forests, though in this particular case verifying 2-colorability of any graph can be done in polynomial time. To strengthen Hadwiger's conjecture so that it works for a larger class of graphs, the notion of signed graphs and their minors are employed.   
	
A \emph{signed graph} $(G,\sigma)$ is a graph $G$ together with a function $\sigma$ that assigns a sign $\sigma(uv)$ ($+$ or $-$) to each edge $uv$ of $G$.  A \emph{switching} at a vertex $x$ of $(G,\sigma)$ is to multiply the sign of all edges incident with $x$ by a negative sign (that is, change the sign of each edge incident with $x$). Any signed graph obtained from $(G,\sigma)$ by a series of switchings is said to be \emph{switching equivalent} to $(G,\sigma)$. A cycle or closed walk $C$ of $(G,\sigma)$ is said to be \emph{negative} or \emph{unbalanced} if it contains an odd number of negative edges; otherwise it is \emph{positive} or \emph{balanced}. In other words, $C$ is negative or positive depending on whether its sign is negative or positive, where the \emph{sign} of $C$ is the product of the signs of its edges. The concept of graph minors has been extended to signed graphs as follows. A signed graph $(H,\pi)$ is said to be a \emph{minor} of a signed graph $(G,\sigma)$ if $(H,\pi)$ can be obtained from $(G,\sigma)$ by a series of the following operations: (i) deleting a vertex, (ii) deleting an edge, (iii) contracting a positive edge, (iv) switching at a vertex. Equivalently, $(H,\pi)$ is said to be a minor of $(G,\sigma)$ if there exist a signed graph $(G, \tau)$ switching-equivalent to $(G, \sigma)$ and $|V(H)|$ pairwise vertex-disjoint subgraphs $B_x$ of $G$, where $x \in V(H)$, such that (i) for each $x \in V(H)$, the set of positive edges of $B_x$ under $\tau$ induces a connected spanning subgraph of $B_x$, and (ii) for each edge $xy$ of $H$, there exists an edge $uv$ of $G$ with $u \in V(B_x)$ and $v \in V(B_y)$ such that $\tau(uv) = \pi(xy)$. We call $B_x$ the \emph{bag} corresponding to $x$. This equivalent definition is based on the fact that one can obtain $(H,\pi)$ from $(G, \tau)$ by deleting all vertices outside these bags, deleting all edges between $B_x$ and $B_y$ except for one edge $uv$ with $\tau(uv) = \pi(xy)$ for each $xy$ of $H$, and contracting all positive edges under $\tau$ in each bag.  
 
Denote by $(G, -)$ be the signed graph with underlying graph $G$ in which all edges are negative. Regarding each graph $G$ as the signed graph $(G,-)$, the class of graphs becomes a subclass of the class of signed graphs. This point of view has led Gerards and Seymour to introduce independently a strengthening of Hadwiger's conjecture which is now known as odd Hadwiger's conjecture (see \cite{JT95, S16}). It was motivated by an earlier result of Catlin in~\cite{pC79} which can be considered as the first nontrivial case of the conjecture.
	
	\begin{conjecture}
		[Odd Hadwiger's conjecture] 
		\label{conj:OddHadwiger} If $(G,-)$ is $(K_t, -)$-minor-free, then $\chi(G)\leq t-1$.
	\end{conjecture}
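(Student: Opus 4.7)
The plan is to verify odd Hadwiger's conjecture for the class $\overline{K}(n,k)$ with $n\geq 2k\geq 4$ announced in the abstract, the full conjecture being widely open. Since a color class in $\overline{K}(n,k)$ is a family of pairwise disjoint $k$-subsets of $[n]$, $\chi:=\chi(\overline{K}(n,k))$ equals the chromatic index of the complete $k$-uniform hypergraph on $[n]$; the Erd\H{o}s--Ko--Rado theorem gives $\omega(\overline{K}(n,k))=\binom{n-1}{k-1}$, realized by the star $S_n:=\{A\in\binom{[n]}{k}:n\in A\}$. One always has $\chi\geq\omega$, with equality when $n=2k$ or whenever a suitable resolvable design exists.

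Via switching, it is enough to find a 2-coloring $c:V\to\{0,1\}$ and vertex-disjoint bags $B_1,\dots,B_\chi$ such that (a) the bichromatic edges inside each $B_i$ span and connect it, and (b) every pair $(B_i,B_j)$ is joined by a monochromatic edge. I would take $c(A)=1$ iff $n\in A$, so color 1 is the clique $S_n$ and color 0 induces a copy of $\overline{K}(n-1,k)$. Any two color-1 vertices meet at $n$, so $\omega$ single-vertex bags, one per $A\in S_n$, serve as \emph{primary} bags, satisfying (a) trivially and (b) among themselves for free.

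For the remaining $\chi-\omega$ \emph{secondary} bags I would take single color-0 vertices forming a clique in $\overline{K}(n-1,k)$, which handles (b) among secondaries. To bridge a secondary bag to every primary one, I would enlarge each primary bag $\{A\}$ into $\{A,A^{\ast}\}$, attaching a color-0 \emph{port} $A^{\ast}$ adjacent to $A$; the bichromatic edge $AA^{\ast}$ keeps (a), and $A^{\ast}$ provides a color-0 same-colored witness to every secondary vertex it meets. All $k$-subsets involved must of course remain pairwise distinct.

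The main obstacle is this joint design: one must simultaneously pick $\chi-\omega$ pairwise intersecting color-0 secondaries and $\omega$ distinct color-0 ports, each port meeting every secondary. This is a covering/SDR problem inside $\overline{K}(n-1,k)$ whose tightness grows with the gap $\chi-\omega$, which can be as large as $\Omega(1.5^{k})$ in the range $7\leq 2k+1\leq n\leq 3k-1$ flagged in the abstract. I would expect to induct on $n$, reducing the construction in $\overline{K}(n,k)$ to one in $\overline{K}(n-1,k)$, or to replace the symmetric bipartition $c$ by one tailored to $n\bmod k$ so that the accounting between primary bags, ports and secondaries closes simultaneously; verifying (a) and (b) under such a replacement will be the technical crux.
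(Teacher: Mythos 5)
Your opening correctly reads the situation---the conjecture itself is open, and the actual task is to prove it for $\overline{K}(n,k)$---and your switch-and-color framework ($c(A)=1$ iff $n\in A$, the EKR star $S_n$ as the primary clique, bichromatic edges inside bags, monochromatic edges between bags) does mirror the paper's use of the clique $\A_1(n,k)$ together with Lemma~\ref{lem:strong1shallow}. The difficulty is in the single-port design, and it is not merely technical. You want $\omega$ distinct color-$0$ ports $A^*$, each required to intersect all $\chi-\omega$ color-$0$ secondaries, which must themselves pairwise intersect. But if the secondaries form a sunflower at an element $x$ (the canonical intersecting family) and $\chi-\omega>k$ (which holds in the regime of interest), then a $k$-element port missing $x$ can meet at most $k$ of the pairwise-disjoint petals, so \emph{every} port must contain $x$; yet there are only $\binom{n-2}{k-1}<\binom{n-1}{k-1}=\omega$ color-$0$ $k$-sets through $x$, too few. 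Choosing a non-sunflower intersecting family only makes the port condition harder. One extra vertex per primary bag simply cannot carry the adjacency load to an exponentially large secondary side.

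The paper balances the load the other way around. The extra bags beyond $\A_1(n,k)$ are not singletons with a port but stars whose \emph{leaf set} is chosen so that the union of its $k$-sets covers at least $n-k+1$ elements of $[n]$ (Constructions~1 and~3, via Lemma~\ref{lem:PartitionA_i} or maximum independent sets), or at least $(n+1)/2$ elements (Construction~2). Such a leaf set is a dominating (or ``half-dominating'') set of $\overline{K}(n,k)$, so every other vertex---singleton bag or leaf of another star---automatically has an edge into it, and all the inter-bag connections come for free; the only remaining work is a three-way counting argument. Switching at each star's center then produces the required $(K_t,-)$-minor. This ``dominating leaf set'' device is the ingredient your proposal lacks: without it, the joint covering/SDR problem you flag as the crux does not close, and the fallbacks you gesture at (induction on $n$, a bipartition tailored to $n\bmod k$) are left entirely unsubstantiated.
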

	
	Note that in most literature a graph $G$ for which $(G,-)$ has no $(K_t, -)$-minor is said to be odd $K_t$-minor-free and this definition is given without using signatures. In such a definition, vertices are 2-colored (corresponding to whether a switching is done at the vertex or not), contracted edges are bicolored (hence positive), and edges representing the connection of bags are monochromatic (hence negative). The above formulation of odd Hadwiger's conjecture based on the notion of signed graphs is taken from~\cite{NRS15}. To better present our work, we will restate odd Hadwiger's conjecture using the following definition.
	
	\begin{definition}
		The \emph{odd Hadwiger number} of a graph $G$, denoted $h_o(G)$, is the largest integer $t$ such that $(K_t, -)$ is a minor of $(G, -)$.  
	\end{definition}
	
	\begin{conjecture}
		[Odd Hadwiger's conjecture restated]
		Every graph $G$ satisfies $\chi(G)\leq h_o(G)$.
	\end{conjecture}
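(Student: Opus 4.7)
The statement to be proved is the full Odd Hadwiger Conjecture in its general form, which is a central open problem in structural graph theory. I therefore outline the natural frontal attack one would mount, together with the decisive obstacle that has so far prevented it from succeeding.

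The plan is induction on $t := \chi(G)$, considering a minimum counterexample $G$ to $\chi(G) \le h_o(G)$, minimized first in $|V(G)|$ and then in $|E(G)|$. Standard criticality arguments then yield that $G$ is $t$-vertex-critical with $\delta(G) \ge t-1$. For the base cases, $t \le 2$ is immediate, and for $t = 3$ the hypothesis $\chi(G) \ge 3$ forces an odd cycle $C$ in $G$; three consecutive vertices of $C$ form the bags of a $(K_3,-)$-minor under the alternating signature inherited from $C$. For $t = 4$, Catlin's theorem in \cite{pC79} supplies the required $(K_4,-)$-minor, and for $t \in \{5,6\}$ one would mirror the Robertson--Seymour--Thomas strategy used for Hadwiger's conjecture in \cite{RST93}, supplemented with extra parity bookkeeping to ensure each bag is bipartite under the chosen switching.

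For the inductive step, the plan is to pick a low-degree or suitably separating vertex $v$, apply the induction hypothesis to a chromatic-critical subgraph of $G-v$ to obtain an odd $(t-1)$-clique minor there, and then extend it through the neighborhood of $v$ to an odd $K_t$-minor. The hard part --- and the reason this conjecture has resisted proof --- is the parity constraint intrinsic to the definition of $h_o(G)$: one must exhibit a switching $\tau$ of $(G,-)$ and pairwise disjoint bags $B_x$ such that, simultaneously, the positive edges of each $B_x$ under $\tau$ span $B_x$ (equivalently, each bag is bipartite under $\tau$) and every bag-joining edge is negative under $\tau$. Forcing bipartiteness inside every bag drastically restricts how bags may be grown by contraction, and balances the negative-edge demand across cuts between bags, so the usual Hadwiger-style contraction arguments break down and no general inductive scheme is known that controls both constraints at once. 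The best currently available unconditional bound, obtained by methods of Geelen--Gerards--Reed--Seymour--Vetta and subsequent refinements, is of the shape $\chi(G) = O(h_o(G)\sqrt{\log h_o(G)})$. Accordingly, the induction-and-criticality framework sketched above can at best reproduce this polylogarithmic approximation; pushing it down to the exact inequality $\chi(G) \le h_o(G)$ in full generality appears to require a genuinely new structural insight into how bipartite bags can be assembled inside an arbitrary chromatic-critical graph, and this is precisely the obstacle I would expect the proof to founder on.
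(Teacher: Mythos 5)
You have not proved the statement, and you could not have: the statement you were given is Conjecture 4 of the paper, i.e.\ odd Hadwiger's conjecture itself, which is an open problem. The paper does not prove it either --- it only states it, and its actual theorem (Theorem 1) establishes the special case $\chi(\overline{K}(n,k))\le h_o(\overline{K}(n,k))$ for complements of Kneser graphs. Your proposal is therefore best read as an honest survey of why a frontal attack fails, and on that level it is broadly accurate (the $t=3$ and $t=4$ cases, Catlin's result, and the $O\bigl(t\sqrt{\log t}\bigr)$-type bounds of Geelen--Gerards--Reed--Seymour--Vetta are correctly placed), though your $t=3$ sketch is slightly off: three consecutive vertices of an odd cycle need not induce a triangle, so the bags must be the three paths of a partition of the whole odd cycle, with a switching making the intra-bag edges positive and the three connecting edges negative. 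But as a ``proof'' of the statement it is, by your own admission, an outline of an approach that is known not to close, so the decisive gap is simply that no argument is supplied for the inductive step, and none is currently known.

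If the intended target was the result the paper actually proves, your induction-and-criticality framework is the wrong tool for that job as well. The paper's route is entirely constructive and avoids any global induction: it uses the elementary observation (Lemma 2) that a \emph{strongly $1$-shallow} $K_t$-minor --- bags that are single vertices or stars, with connections through leaves --- becomes a $(K_t,-)$-minor of $(G,-)$ after switching at the star centres, and then builds such minors in $\overline{K}(n,k)$ explicitly. The bags are assembled from the cliques $\A_i(n,k)$ of sets containing a fixed smallest element, partitioned via the Xu--Zhou lemma (Lemma 6) so that each part is a dominating set or intersects every other part, and the order $t$ of the resulting complete minor is compared with Baranyai's formula $\chi(\overline{K}(n,k))=\lceil\binom{n}{k}/\lfloor n/k\rfloor\rceil$ by a case analysis over $s=\lfloor n/k\rfloor$ and $r=n-sk$. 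Nothing in your sketch engages with this structure, so for the paper's theorem the concrete missing ideas are the switching trick of Lemma 2 and the three explicit constructions of strongly $1$-shallow complete minors.
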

	
	Observe that if $(K_t,-)$ is a minor of a signed graph $(G, \sigma)$, then $K_t$ is a minor of $G$. In other words, if $G$ is $K_t$-minor-free, then $(G,-)$ is $(K_t, -)$-minor-free. (Note that the converse is not necessarily true.) So we have $h_o(G) \le h(G)$. Thus, if $G$ satisfies odd Hadwiger's conjecture, then it satisfies Hadwiger's conjecture. Furthermore, if true, odd Hadwiger's conjecture bounds the chromatic number of a much larger class of graphs which, in particular, contains the class of bipartite graphs, whereas Hadwiger's conjecture provides no bound for the chromatic number of this basic class. Hence in the study of odd Hadwiger's conjecture one bounds the chromatic number of a family of graphs some of whose members are dense graphs, whereas in the study of Hadwiger's conjecture one can only bound the chromatic number of sparse graphs. It is not hard to verify that in the case $t = 3$ odd Hadwiger's conjecture says that every graph with no odd cycle is 2-colorable, which is trivially true. The case $t=4$ is the above-mentioned result of Catlin \cite{pC79}. The case $t=5$ implies the four color theorem. A proof in this case, using the four color theorem, was announced by Gueinin in 2004 but no publication is available yet. All remaining cases are open. 
	
	While odd Hadwiger's conjecture is stronger than Hadwiger's conjecture, an asymptotic inverse implication is provided recently in \cite{S22}. More specifically, it is shown by Steiner \cite{S22} that if every $K_t$-minor-free graph admits an $f(t)$-coloring for some function $f$, then graphs with no odd $K_t$-minors admit a $2f(t)$-coloring. In particular, if Hadwiger's conjecture is proved, then it would imply that every odd $K_t$-minor-free graph is $2(t-1)$-colorable. See \cite{JMNNQ24+} for more connections between odd Hadwiger's conjecture and colorings of signed graphs.
	
A strengthening of Hadwiger's conjecture, Haj\'{o}s' conjecture says that every graph $G$ with $\chi(G) \ge t$ contains a subdivision of $K_t$. This conjecture was proved to be false for every $t \ge 7$ by Catlin \cite{pC79}. In \cite{thom}, Thomassen gave several new classes of counterexamples to Haj\'os' conjecture, including the complements of the Kneser graphs $K(3k-1, k)$ for sufficiently large $k$. (Given integers $n$ and $k$ with $n \ge 2k \ge 4$, the \emph{Kneser graph} $K(n,k)$ is the graph whose vertices are the $k$-subsets of $[n] = \{1, 2, \ldots, n\}$ in which two such vertices are adjacent if and only if they are disjoint.) Meanwhile, he noticed that ``it does not seem obvious" that these classes all satisfy Hadwiger's conjecture. As a follow-up to this remark, Xu and Zhou \cite{xz16} proved that indeed Hadwiger's conjecture is true for the complement of every Kneser graph. Since odd Hadwiger's conjecture is stronger than Hadwiger's conjecture, it would be natural to ask whether the complements of all Kneser graphs satisfy odd Hadwiger's conjecture. In this paper we prove that this is indeed the case. In a special case, our result confirms odd Hadwiger's conjecture for a particular family of graphs with independence number 2, namely the complements of Kneser graphs $K(n,k)$ with $2k+1 \le n \le 3k-1$. 
Among them the complement of $K(3k-1, k)$ is especially interesting because not only does it provide a counterexample to Haj\'{o}s' conjecture for sufficiently large $k$ but also the gap between its chromatic number $\binom{3k-1}{k-1}$ and clique number $\binom{3k-2}{k-1}$ can be arbitrarily large when $k$ approaches infinity. In general, as with the case of Hadwiger's conjecture, it is expected that the larger the gap between the chromatic number and clique number is, the harder it is to verify odd Hadwiger's conjecture.

A \emph{star} is the complete bipartite graph $K_{1, s}$ for some $s \ge 1$. A minor of a graph is said to be $r$-shallow if each of its bags has radius at most $r$. A minor is said to be \emph{strongly $1$-shallow} if each of its bags is either a single vertex or a star, the edge representing the connection between two star bags joins leaf vertices of the bags, and the edge representing the connection between a star bag and a single-vertex bag joins a leaf vertex of the star bag and the unique vertex of the single-vertex bag. Denote by $\overline{K}(n,k)$ the complement of $K(n,k)$. Our main result, stated below, shows that not only does $\overline{K}(n,k)$ satisfy odd Hadwiger's conjecture but also it admits a strongly $1$-shallow $K_t$-minor for some $t$ no less than $\chi(\overline{K}(n,k))$. Moreover, for $2k+1 \le n \le 3k-1$, the gap between the odd Hadwiger number and chromatic number of $\overline{K}(n,k)$ is $\Omega(1.5^{k})$; in particular, this gap can be arbitrarily large as $k \rightarrow \infty$. 
	
\begin{theorem}\label{thm:OddHadwigerForCompKneser}
For any integers $n$ and $k$ with $n\geq 2k \ge 4$, we have 
$$
\chi (\overline{K}(n,k)) \leq h_o(\overline{K}(n,k)).
$$ 
Moreover, there exists an integer $t = t(n, k) \geq \chi(\overline{K}(n,k))$ such that $\overline{K}(n,k)$ contains a strongly $1$-shallow $K_t$-minor. Furthermore, for $k \ge 3$, we have
$$
h_o(\overline{K}(n,k)) - \chi (\overline{K}(n,k)) \ge  
\begin{cases}
\frac{1}{12}(1.5^{k-1}-8k-2), & \text{ if }\ 2k+1 \le n \le 3k-2 \\
\frac{1}{6}(1.5^{k-1}-11), & \text{ if }\ n = 3k-1.
\end{cases}
$$
\end{theorem}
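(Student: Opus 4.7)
My plan is to derive all three conclusions of Theorem~\ref{thm:OddHadwigerForCompKneser} from a single combinatorial construction: an explicit strongly $1$-shallow $K_t$-minor of $\overline{K}(n,k)$ with $t \ge \chi(\overline{K}(n,k))$, and with an additional $\Omega(1.5^k)$ surplus when $2k+1 \le n \le 3k-1$. The key reduction, which turns the strongly $1$-shallow statement into odd Hadwiger, is that any such minor can be promoted to an odd $K_t$-minor by switching at the center of every star bag. After switching, each center--leaf edge becomes positive, so the star is spanned by positive edges, while every connecting edge between two bags -- which by the ``strongly'' condition joins only leaves or singleton vertices -- is untouched by the switching and therefore remains negative. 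This realizes a $(K_t,-)$-minor of $(\overline{K}(n,k),-)$, giving $h_o(\overline{K}(n,k)) \ge t$.

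For the baseline $t \ge \chi(\overline{K}(n,k))$, I plan to start from a minimum partition of the $k$-subsets of $[n]$ into partial matchings $M_1,\dots,M_\chi$, i.e.\ the color classes of an optimal proper coloring of $\overline{K}(n,k)$ (an independent set of $\overline{K}(n,k)$ is precisely a family of pairwise disjoint $k$-subsets). Since each $M_i$ is itself independent, I promote $M_i$ to a connected bag by attaching a center $C_i$ -- a $k$-subset that intersects every member of $M_i$, obtained by picking one representative from each matched set and padding to size $k$, which is feasible because $|M_i| \le \lfloor n/k \rfloor \le k$. Coordinating the centers globally so that each $k$-subset lies in exactly one bag gives a family of $\chi(\overline{K}(n,k))$ bags, each a singleton or a star. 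Between any two bags, a connecting edge between their leaves exists because pairs of $k$-subsets drawn from distinct matchings of the complete $k$-uniform hypergraph on $[n]$ are overwhelmingly intersecting, an observation that will be invoked via Erd\H{o}s--Ko--Rado style counting.

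For the $\Omega(1.5^k)$ surplus in the range $2k+1 \le n \le 3k-1$, I plan to exploit $\alpha(\overline{K}(n,k)) = 2$: the only nontrivial independent sets are disjoint pairs $\{A, A'\}$ of $k$-subsets. Each such pair is a candidate two-leaf star bag $\{C, A, A'\}$ whose center $C$ is any $k$-subset meeting both $A$ and $A'$; such centers are abundant since $n - 2k \ge 1$ leaves ample freedom. The gap estimate comes from counting how many such extra two-leaf star bags can be inserted beyond those already needed for the baseline cover, and then using standard binomial-coefficient asymptotics. The split in the theorem's bound between $2k+1 \le n \le 3k-2$ and $n = 3k-1$ reflects the tighter availability of valid centers at the upper endpoint of the range, where the complement of $A \cup A'$ in $[n]$ has only $k-1$ elements.

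The main obstacle, and where I expect the heavy lifting, is the disjointness bookkeeping for both the baseline and the surplus simultaneously: every $k$-subset of $[n]$ must lie in exactly one bag, so the $\chi+\Omega(1.5^k)$ centers, leaves, and singletons must be selected coherently and without clashes while preserving the connecting-edge condition between every pair of bags. The connectivity condition between bags is nearly automatic because $\alpha=2$ forces almost all pairs of $k$-subsets to be adjacent in $\overline{K}(n,k)$; the delicate part is the inventory of which $k$-subsets have been consumed as centers, leaves, or singletons. This careful accounting is where the specific constants $\tfrac{1}{12}$ vs.\ $\tfrac{1}{6}$ and the additive corrections $-8k-2$ vs.\ $-11$ in the theorem's bound will emerge, and where the case distinction between $n \le 3k-2$ and $n = 3k-1$ is forced.
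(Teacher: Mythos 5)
Your opening reduction---switch at the centre of every star bag so that the star is spanned by positive edges while all inter-bag connections (which only touch leaves or singletons) stay negative---is exactly the paper's Lemma~\ref{lem:strong1shallow}, so that piece is right. But the construction you propose is genuinely different from the paper's, and as written it has gaps that I don't think are merely bookkeeping.

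First, the centre assignment in your baseline is circular. You take a Baranyai partition $M_1,\dots,M_\chi$ of $\binom{[n]}{k}$ and attach to each $M_i$ a centre $C_i$ intersecting every member of $M_i$. Since $M_i$ is a family of pairwise disjoint $k$-sets, $C_i\notin M_i$, so $C_i$ lives in some other class $M_j$, and $C_i$ would then belong to two bags. ``Coordinating globally'' requires a consistent system of distinct representatives across all classes simultaneously, and a class that donates its vertex as someone else's centre may be left with too few leaves, or no leaves, while still needing a centre of its own; you flag this as heavy lifting but don't indicate why it terminates. Second, your feasibility remark $|M_i|\le\lfloor n/k\rfloor\le k$ is false once $n\ge k(k+1)$: a single $k$-set cannot meet more than $k$ pairwise disjoint $k$-sets, so the centre trick fails outright for large $s=\lfloor n/k\rfloor$. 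The paper sidesteps both problems by never using Baranyai classes as bags: it builds bags inside the cliques $\mathcal{A}_i(n,k)$ of $k$-sets with least element $i$ (so centre adjacency is automatic, since all members share $i$), and invokes Lemma~\ref{lem:PartitionA_i} from~\cite{xz16} to split each $\mathcal{A}_i$ into small blocks whose unions are large enough to dominate $\overline{K}(n,k)$; three variants (Constructions~1--3) cover different $s$-ranges. That partition lemma is the missing ingredient in your plan. Third, your surplus step proposes to ``insert extra two-leaf star bags beyond the baseline cover,'' but your baseline already uses every $k$-subset, so there is nothing left to insert; what is really needed is a different bag design from scratch (the paper's Construction~3: all of $\mathcal{A}_1(n,k)$ as singletons, plus $m$ vertex-disjoint dominating pairs from the rest with carefully counted available centres, where the quantity $\overline{d}(n,k)$ accounts for blocked centres and produces the $\tfrac{1}{12}$ versus $\tfrac{1}{6}$ and the additive corrections). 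Your intuition about $\alpha=2$ and the source of the exponential gap is on target, but without the clique-based bags and the dominating-block lemma the construction does not close.
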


Since $\overline{K}(n,2)$ is isomorphic to the line graph of $K_n$, the statement that $\chi (\overline{K}(n,2)) \leq h_o(\overline{K}(n,2))$ is covered by a recent result of Steiner [18] which establishes odd Hadwiger's conjecture for the line graphs of simple graphs.

The strongly $1$-shallow $K_t$-minor of $\overline{K}(n,k)$ guaranteed by Theorem \ref{thm:OddHadwigerForCompKneser} gives rise to a $(K_t, -)$-minor of $(\overline{K}(n,k), -)$ with the same bags and connections (see Lemma \ref{lem:strong1shallow} and its proof). It is worth mentioning that Bollobás, Catlin and Erd\H{o}s \cite{BCE80} proved that Hadwiger's conjecture holds for almost all graphs. In fact, they proved the existence of a strongly 1-shallow $K_t$-minor in almost every $t$-chromatic graph, so they actually established odd Hadwiger's conjecture for almost all graphs. However, even if we know a $t$-chromatic graph satisfies odd Hadwiger's conjecture, in general it is difficult to find a strongly 1-shallow $K_t$-minor in it or prove such a $K_t$-minor does not exist. In our proof of Theorem \ref{thm:OddHadwigerForCompKneser}, we will give an explicit construction of such a strongly 1-shallow $K_t$-minor in $\overline{K}(n,k)$.
	
The rest of this paper is organized as follows. In the next section we introduce notation and present five preliminary results, including two lemmas which prove Theorem \ref{thm:OddHadwigerForCompKneser} when $k = 2$ and $k$ is a divisor of $n$, respectively. In Section \ref{sec:const}, we give three constructions of strongly 1-shallow $K_t$-minors of $\overline{K}(n,k)$ using a lemma from \cite{xz16} (see Lemma \ref{lem:PartitionA_i} in the next section). In Section \ref{sec:pf}, we prove that for each pair of integers $n, k$ with $n \ge 2k + 1 \ge 7$ at least one of these constructions gives a strongly 1-shallow $K_t$-minor with $t \ge \chi(\overline{K}(n,k))$, thereby completing the proof of Theorem \ref{thm:OddHadwigerForCompKneser}.

\section{Preliminaries}

\begin{lemma}
\label{lem:strong1shallow}
If a graph $G$ contains a strongly $1$-shallow $K_t$-minor, then $h_o(G)\geq t$.
\end{lemma}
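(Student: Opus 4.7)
The plan is to reuse the bags witnessing the given strongly $1$-shallow $K_t$-minor of $G$ and produce, by a single well-chosen switching of $(G,-)$, a $(K_t,-)$-minor on the same family of bags. By the equivalent definition of signed-graph minor recalled in the introduction, it suffices to construct a signature $\tau$ on $G$ switching-equivalent to $-$ such that (i) inside each bag $B_x$ the positive edges under $\tau$ form a connected spanning subgraph of $B_x$, and (ii) for every edge $xy$ of $K_t$ there is an edge between $B_x$ and $B_y$ whose $\tau$-sign equals $\pi(xy)=-$.

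First I would let $\tau$ be obtained from the all-negative signature on $G$ by switching precisely at the center of each star bag (leaves of star bags and vertices of single-vertex bags are not switched). For each star bag, every one of its star-edges is incident with the center and is therefore flipped from $-$ to $+$ by the switching; hence after the switching the star edges inside the bag are all positive and form a connected spanning subgraph of the bag, giving condition (i), while single-vertex bags satisfy (i) vacuously. For condition (ii), I would invoke the defining property of a strongly $1$-shallow minor: each selected connection edge either joins two leaves of distinct star bags, or joins a leaf of a star bag to the unique vertex of a single-vertex bag, or (between two single-vertex bags) joins the two isolated vertices; in all three cases neither endpoint is the center of any star bag. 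Consequently the switching performed at centers leaves the sign of the connection edge untouched, and it remains negative, matching $\pi(xy)=-$.

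Combining the two verifications, $(K_t,-)$ is a minor of $(G,\tau)$, and since $(G,\tau)$ is switching-equivalent to $(G,-)$, $(K_t,-)$ is also a minor of $(G,-)$; hence $h_o(G)\ge t$ by definition. There is no real obstacle in this argument: its entire content is the observation that the ``leaves-only'' rule in the definition of a strongly $1$-shallow minor is precisely the hypothesis needed so that switching at the centers of the star bags simultaneously (a) makes every edge used for contraction positive inside its bag, and (b) leaves every chosen between-bag connecting edge negative. Any additional edges of $G$ inside a bag or between bags (which might have their signs flipped by the switching) play no role, as they can be deleted before performing the contractions.
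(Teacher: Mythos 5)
Your argument is correct and is exactly the paper's proof, just written out with the verification of conditions (i) and (ii) made explicit: the paper likewise switches at the center of each star bag and observes this yields a $(K_t,-)$-minor. No substantive differences.
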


\begin{proof}
Suppose that $H$ is a strongly 1-shallow $K_t$-minor of $G$. Then switching at the center vertex of each star bag of $(H,-)$ results in a $(K_t,-)$-minor of $(G,-)$. Hence $h_o(G)\geq t$.
\end{proof}
	
In what follows we always assume $n$ and $k$ are integers with $n \ge 2k \ge 4$. Set $[n] = \{1, 2, \ldots, n\}$ and denote by $\binom{[n]}{k}$ the set of all $k$-subsets of $[n]$. Then both $K(n,k)$ and $\overline{K}(n,k)$ have vertex set $\binom{[n]}{k}$.  

\begin{lemma} 
\label{lem:divisor}
For any integers $s, k \ge 2$, we have $\chi (\overline{K}(sk,k)) \leq h_o(\overline{K}(sk,k))$, and $\overline{K}(sk,k)$ contains a strongly $1$-shallow $K_t$-minor, where $t = \chi(\overline{K}(sk,k))$. 
\end{lemma}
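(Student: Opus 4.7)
The plan is to combine Baranyai's partition theorem with an explicit intersecting family construction to pin down $\chi(\overline{K}(sk,k))$, so that the desired strongly $1$-shallow minor drops out trivially from a complete subgraph. Under the hypothesis that $k$ divides $n$, both the chromatic number and the clique number of $\overline{K}(sk,k)$ coincide at $\binom{sk-1}{k-1}$, and this coincidence is what drives the whole argument.

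First, I would upper-bound $\chi(\overline{K}(sk,k))$ by $\binom{sk-1}{k-1}$ via Baranyai's theorem. Because $k$ divides $sk$, Baranyai's theorem supplies a partition of $\binom{[sk]}{k}$ into $\binom{sk-1}{k-1}$ resolution classes, each class being itself a partition of $[sk]$ into $k$-element blocks. Inside any such resolution class the $k$-subsets are pairwise disjoint, hence form an independent set in $\overline{K}(sk,k)$, and this yields a proper vertex-coloring of $\overline{K}(sk,k)$ using $\binom{sk-1}{k-1}$ colors.

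Next, I would match this bound from below by the explicit intersecting family
$$
\mathcal{F} = \left\{ A \in \binom{[sk]}{k} : 1 \in A \right\},
$$
which has $|\mathcal{F}| = \binom{sk-1}{k-1}$, and in which any two members share the element $1$ and are therefore adjacent in $\overline{K}(sk,k)$. Hence $\mathcal{F}$ induces a clique of order $t := \binom{sk-1}{k-1}$ in $\overline{K}(sk,k)$, so in fact $\omega(\overline{K}(sk,k)) \ge t$. Combined with the upper bound above and the trivial inequality $\omega \le \chi$, this gives $\omega(\overline{K}(sk,k)) = \chi(\overline{K}(sk,k)) = t$.

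Finally, taking each vertex of the $K_t$ induced by $\mathcal{F}$ as its own single-vertex bag produces a $K_t$-minor whose bags are all single vertices; the clauses in the definition of strongly $1$-shallow that refer to star bags are vacuously satisfied. Lemma~\ref{lem:strong1shallow} then yields $h_o(\overline{K}(sk,k)) \ge t = \chi(\overline{K}(sk,k))$, completing the plan. I do not foresee any genuine obstacle here: in the ``divisible'' case $n = sk$, Baranyai's theorem already matches the obvious clique bound coming from $\mathcal{F}$, which is precisely why the lemma is singled out under the hypothesis $k \mid n$ and can be dispatched before the more delicate general constructions of Section~\ref{sec:const}.
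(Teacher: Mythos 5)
Your proposal is correct and follows essentially the same route as the paper: the paper's proof simply asserts that $\chi(\overline{K}(sk,k))$ equals the clique number and then takes a maximum clique as a collection of single-vertex bags, and you supply the standard details behind that equality (Baranyai's theorem for the upper bound on $\chi$, the point-stabilizer intersecting family for the lower bound on $\omega$).
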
  

\begin{proof}	
The chromatic number and clique number of $\overline{K}(sk,k)$ are equal. Turning each vertex in a maximum clique of $\overline{K}(sk,k)$ into a single-vertex bag, we obtain a strongly 1-shallow $K_{t}$-minor of $\overline{K}(sk,k)$ where $t = \chi(\overline{K}(sk,k))$. Hence the result follows. 
\end{proof}

\begin{lemma} 
\label{lem:2}
For any integer $n \ge 4$, we have $\chi (\overline{K}(n,2)) \leq h_o(\overline{K}(n,2))$, and $\overline{K}(n,2)$ contains a strongly $1$-shallow $K_t$-minor, where $t = n-1 = \chi (\overline{K}(n,2))$ when $n$ is even and $t = n = \chi (\overline{K}(n,2))$ when $n$ is odd.
\end{lemma}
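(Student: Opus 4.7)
The plan is to identify $\overline{K}(n,2)$ with the line graph $L(K_n)$ of $K_n$: two $2$-subsets of $[n]$ are adjacent in $\overline{K}(n,2)$ iff they meet, which is exactly the condition for the corresponding edges of $K_n$ to share a vertex. Under this identification, $\chi(\overline{K}(n,2)) = \chi'(K_n)$, which by the classical edge-colouring formula for the complete graph equals $n-1$ when $n$ is even and $n$ when $n$ is odd. The strategy is then to build a strongly $1$-shallow $K_t$-minor with the stated value of $t$ in each parity case and invoke Lemma~\ref{lem:strong1shallow}.

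For even $n$ one has $\omega(\overline{K}(n,2)) = n-1 = \chi$: the $n-1$ vertices $\{1,2\}, \{1,3\}, \ldots, \{1,n\}$ of $\overline{K}(n,2)$ all contain the element $1$ and therefore form an $(n-1)$-clique. Declaring each as a single-vertex bag gives a strongly $1$-shallow $K_{n-1}$-minor at once, essentially as in Lemma~\ref{lem:divisor} with $k=2$.

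For odd $n \geq 5$ the clique number is only $n-1$, so an additional bag must actually be built. I reuse the same $(n-1)$-clique as single-vertex bags $B_i := \{\{1, i+1\}\}$ for $i = 1, \ldots, n-1$ and add a star bag $B_n$ all of whose vertices are $2$-subsets of $\{2, 3, \ldots, n\}$, guaranteeing vertex-disjointness from the $B_i$ automatically. Take the centre to be $c := \{2, 3\}$ and the leaf set
$$
L := \{\{2, 4\}, \{3, 5\}\} \cup \{\{2, k\} : 6 \le k \le n\}.
$$
Each leaf shares $2$ or $3$ with $c$, so $B_n$ is a star in $\overline{K}(n,2)$. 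Moreover every $k \in \{2, \ldots, n\}$ occurs in some leaf (namely $\{2,4\}$ when $k \in \{2,4\}$, $\{3,5\}$ when $k \in \{3,5\}$, and $\{2,k\}$ when $k \ge 6$), and that leaf shares $k$ with the singleton bag $B_i = \{\{1, k\}\}$, supplying the required leaf-to-single-vertex connection edge. Pairwise adjacency among the single-vertex bags is automatic from the shared element $1$, and all connection edges meet the endpoint conditions of the strongly $1$-shallow definition.

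I do not foresee any substantive obstacle: the only case demanding a real construction is $n$ odd, and there the centre $\{2,3\}$ is chosen precisely so that leaves of the form $\{2, k\}$ or $\{3, k\}$ automatically meet every singleton $\{1, k\}$, while $L$ is engineered to cover every element of $\{2, \ldots, n\}$ without ever using the element $1$. Lemma~\ref{lem:strong1shallow} then upgrades the constructed minor to $h_o(\overline{K}(n,2)) \ge t$, completing the argument.
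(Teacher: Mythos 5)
Your proposal is correct and follows essentially the same route as the paper's proof: even $n$ via the $\{1,i\}$-clique and \cref{lem:divisor}, and odd $n$ by adding one star bag contained in $\binom{\{2,\ldots,n\}}{2}$ whose vertices collectively cover every element of $\{2,\ldots,n\}$, so each leaf meets the appropriate singleton $\{1,k\}$. The only difference is cosmetic — you centre the star at $\{2,3\}$ with leaves $\{2,4\},\{3,5\},\{2,6\},\ldots,\{2,n\}$, while the paper centres at $\{2,4\}$ with leaves $\{2,3\},\{3,4\},\{2,5\},\ldots,\{2,n\}$ — and both verifications are equally immediate.
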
  

\begin{proof}	
If $n$ is even, then $\chi(\overline{K}(n,2)) = n-1$ and the result follows from Lemma \ref{lem:divisor}. If $n$ is odd, then $\chi (\overline{K}(n,2)) = n$, and we can obtain a strongly 1-shallow $K_n$-minor of $\overline{K}(n,2)$ by treating each vertex $\{1, i\}$, $2 \le i \le n$ as a single-vertex bag and taking the star bag with center vertex $\{2, 4\}$ and leaf vertices $\{2,3\}, \{3, 4\}, \{2, 5\}, \{2, 6\}, \ldots, \{2, n\}$. So the result is also true when $n$ is odd.  
\end{proof}

The following formula for the chromatic number of $\overline{K}(n,k)$ was obtained by Baranyai \cite{bar73} in 1973. We will use it in our proof of Theorem \ref{thm:OddHadwigerForCompKneser}.
	
	\begin{lemma}[\cite{bar73}]
		\label{lem:Baranyai} 
		$\chi (\overline{K}(n,k)) = \left\lceil \binom{n}{k} / {\lfloor \frac{n}{k}\rfloor} \right\rceil$.
	\end{lemma}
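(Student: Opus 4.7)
The statement is Baranyai's classical partition theorem (1975) phrased in the language of the chromatic number of the complement of a Kneser graph. My plan is to establish the two matching bounds separately; the lower bound is an elementary double-counting argument, while the upper bound is the nontrivial direction and is the substance of Baranyai's theorem.

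For the lower bound I would argue as follows. A proper coloring of $\overline{K}(n,k)$ partitions $\binom{[n]}{k}$ into independent sets, and an independent set of $\overline{K}(n,k)$ is precisely a family of pairwise disjoint $k$-subsets of $[n]$ (since two vertices of $\overline{K}(n,k)$ are non-adjacent if and only if they are disjoint as $k$-subsets). Because the union of such a family is contained in $[n]$, each color class has cardinality at most $\lfloor n/k\rfloor$. Dividing $\binom{n}{k}$ by this cap and taking the ceiling gives $\chi(\overline{K}(n,k)) \ge \lceil \binom{n}{k}/\lfloor n/k\rfloor\rceil$.

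For the matching upper bound, the task is to exhibit a partition of $\binom{[n]}{k}$ into $m := \lceil \binom{n}{k}/\lfloor n/k\rfloor\rceil$ classes, each a family of pairwise disjoint $k$-subsets. This is exactly Baranyai's theorem, and my plan would be to reproduce the integer rounding lemma underlying Baranyai's original proof: every nonnegative rational matrix whose row sums and column sums are integers can be rounded to a nonnegative integer matrix with the same row and column sums and with each entry changed by strictly less than one. Starting from the uniform fractional ``coloring'' that places each $k$-subset in each of the $m$ intended classes with weight $1/m$ (with marginals made integral by a padding trick when $k \nmid n$), I would apply this rounding iteratively, at the $j$-th stage rounding the incidence matrix recording how many $j$-subsets of each designated color class lie in each candidate $k$-set. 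At each step the partial integrality achieved so far is preserved, and after $k$ stages one obtains a $0/1$ matrix that encodes the desired partition.

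The main obstacle is the case $k\nmid n$, in which at most one color class is smaller than $\lfloor n/k\rfloor$ and must absorb the remainder $\binom{n}{k}\bmod\lfloor n/k\rfloor$; here the marginal constraints fed into the rounding lemma are unequal across rows, and one has to set up the auxiliary matrices carefully so that the rounding respects both the target class sizes and the requirement that every $k$-subset appears in exactly one class. Since a complete proof of the required statement is available in \cite{bar73}, and since for the present paper only the resulting chromatic number formula is needed as input to the later constructions, I would ultimately invoke Baranyai's theorem as a black box rather than reproduce the rounding details in full.
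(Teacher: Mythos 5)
Your proposal is correct and ultimately does what the paper does: the paper states this lemma as a known result of Baranyai \cite{bar73} without proof, and you likewise invoke Baranyai's theorem as a black box for the hard (upper-bound) direction, supplementing it with the standard and correct counting argument that each color class of $\overline{K}(n,k)$ is a family of pairwise disjoint $k$-sets and hence has size at most $\lfloor n/k\rfloor$. Your sketch of the integer-rounding proof is a reasonable account of Baranyai's argument, but it is not needed for, nor present in, the paper.
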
    
	
For $1 \le i \le n-2k+1$, let 
$$
\A_i(n,k) = \left\{A \in \binom{[n]}{k}: i \in A,\, A \setminus \{i\} \subseteq \{i+1, i+2, \ldots, n\}\right\}.
$$ 
Clearly, $|\A_i(n,k)| = \binom{n-i}{k-1}$ and each member of $\A_i(n,k)$ is a vertex of $\overline{K}(n,k)$. Since all members of $\A_i(n,k)$ contain $i$, $\A_i(n,k)$ is a clique of $\overline{K}(n,k)$. As a side remark, by the well-known Erd\H{o}s-Ko-Rado theorem, $\A_i(n,k)$, $i=1,2, \ldots, n-2k+1$, are maximum cliques of $\overline{K}(n,k), \overline{K}(n-1,k), \ldots, \overline{K}(2k,k)$, where the ground set of $\overline{K}(n-i+1,k)$ is $\{i, i+1, \ldots, n\}$. The following lemma is one of the key tools for our proof of Theorem \ref{thm:OddHadwigerForCompKneser}.
	
	\begin{lemma}[{\cite[Lemma 4]{xz16}}]\label{lem:PartitionA_i}
		Given positive integers $i$ and $l$ with $i \le n-k+1$, let $c_i = \lfloor {n-i \choose k-1}/l \rfloor$. Then $\A_i(n,k)$ can be partitioned into $\A_{i1}^l(n,k), \A_{i2}^l(n,k), \ldots, \A_{i{c_i}}^l(n,k)$ each of size $l$, together with $\A_{i,{c_i}+1}^l(n,k)$ of size ${n-i \choose k-1} - {c_i}l$ when ${n-i \choose k-1}$ is not divisible by $l$, such that for $1\leq j \leq c_i$,  
\begin{equation}
\label{eq:min}
|\cup_{A \in \A_{ij}^l(n,k)} A| \ge \min\{n-i+1, l(k-1)+1\}.
\end{equation}
	\end{lemma}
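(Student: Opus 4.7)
Writing each $A \in \A_i(n,k)$ as $A = \{i\} \cup B$ with $B$ a $(k-1)$-subset of $\{i+1,\ldots,n\}$, and setting $m = n-i$ and $r = k-1$, I would identify $\A_i(n,k)$ with $\binom{[m]}{r}$ via $A \mapsto B$. Since $i$ belongs to every $A$, the union of any group of $l$ such sets has cardinality $1 + |\bigcup B|$, so the target bound $|\bigcup A| \ge \min(n-i+1, l(k-1)+1)$ becomes $|\bigcup B| \ge \min(m, lr)$. It therefore suffices to partition $\binom{[m]}{r}$ into $c_i$ groups of size exactly $l$ plus one possibly smaller leftover, so that each full-size group $\mathcal{G}$ satisfies $|\bigcup_{B \in \mathcal{G}} B| \ge \min(m, lr)$.

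I would then split into two cases. In the \emph{sparse} regime $lr \le m$, the bound forces each full-size group to be a matching of size $l$ in the complete $r$-uniform hypergraph on $[m]$, and I would construct such a partition by applying Baranyai's theorem to decompose $\binom{[m]}{r}$ into near-parallel classes (matchings of size $\lfloor m/r \rfloor \ge l$), then slicing each class into sub-matchings of size $l$. In the \emph{dense} regime $lr > m$, each full-size group must cover $[m]$, and I would extract groups iteratively: given the remaining family, pick a minimal cover of $[m]$ (which uses $\lceil m/r \rceil \le l$ sets) and pad it with arbitrary remaining sets to reach size exactly $l$.

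The main obstacle lies in the bookkeeping. In the sparse case, slicing parallel classes into blocks of size $l$ typically produces residuals of size less than $l$ in each class, and reassembling these into additional size-$l$ matchings (without violating the matching condition) requires either an exchange argument across classes or an appeal to a stronger ordering form of Baranyai's theorem. In the dense case, the greedy can fail if it exhausts all sets containing some element of $[m]$ too early, so the padding must be chosen carefully, preferring sets covering common elements and saving sets covering scarce elements, to keep a cover available at every iteration. A cleaner alternative that bypasses both issues would be to construct an explicit cyclic ordering $B_1, B_2, \ldots, B_{\binom{m}{r}}$ of $\binom{[m]}{r}$ with the property that every $l$ consecutive sets already satisfy the required union bound, and then take consecutive blocks of $l$ as the groups.
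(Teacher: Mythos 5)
Your reduction to partitioning $\binom{[m]}{r}$ (with $m=n-i$, $r=k-1$) so that every full group of size $l$ has union of size at least $\min(m,lr)$ is exactly right, and the conceptual split into a matching regime ($lr\le m$) and a covering regime ($lr>m$) correctly identifies what the union bound demands. But in both regimes your sketch stops short of a proof, and the obstacles you flag are genuine, not cosmetic. In the sparse case, slicing near-parallel classes into blocks of $l$ does leave residuals of size $<l$ in each class, and you cannot simply concatenate residuals from different classes: two near-parallel classes share no structural relationship, so a block straddling two of them can have small union. An exchange argument is possible but delicate, and you do not carry it out. In the dense case, the greedy-cover-plus-padding scheme can genuinely strand itself: after several rounds of arbitrary padding, some element of $[m]$ may have all its remaining incident $r$-sets confined to fewer than $\lceil m/r\rceil$ future groups, so no cover exists. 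You acknowledge both problems but resolve neither, and the proposed fallback of a single cyclic ordering of $\binom{[m]}{r}$ in which every window of $l$ consecutive sets meets the union bound is not a known result and is not obviously true.

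The missing ingredient, which the proof in \cite{xz16} uses, is the strong form of Baranyai's theorem due to Baranyai, Brouwer and Schrijver \cite{bs}: for any prescribed positive integers $a_1,\dots,a_p$ summing to $\binom{m}{r}$, the family $\binom{[m]}{r}$ admits a partition into parts $\mathcal{G}_1,\dots,\mathcal{G}_p$ with $|\mathcal{G}_j|=a_j$ such that each element of $[m]$ lies in either $\lfloor a_jr/m\rfloor$ or $\lceil a_jr/m\rceil$ members of $\mathcal{G}_j$. Applying this with $a_1=\dots=a_{c_i}=l$ and one leftover size makes both of your cases collapse into a single one-line argument: if $lr\ge m$ then each element lies in at least $\lfloor lr/m\rfloor\ge 1$ member of $\mathcal{G}_j$, so $\bigcup\mathcal{G}_j=[m]$; if $lr<m$ then each element lies in at most $\lceil lr/m\rceil=1$ member of $\mathcal{G}_j$, so $\mathcal{G}_j$ is a matching and $|\bigcup\mathcal{G}_j|=lr$. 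Your proposal circles around this theorem without invoking it, and without it the ad hoc arguments you sketch do not close; so as written there is a real gap.
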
    
	
The following well-known identity will be used to simplify formulas in our proof of Theorem~\ref{thm:OddHadwigerForCompKneser}: for integers $a \ge b \ge 0$,
$$ 
\sum_{i=0}^{a}{i \choose b}={a+1 \choose b+1}.
$$

\section{Building strongly $1$-shallow $K_t$-minors in $\overline{K}(n,k)$}
\label{sec:const}

In view of Lemmas  \ref{lem:divisor} and \ref{lem:2}, from now on we assume 
$$
n=sk+r, \text{ where $s\geq 2$, $k\geq 3$ and $1 \le r\leq k-1$}.
$$ 
	
	\subsection{Construction 1}

Choose $l$ to be the smallest integer such that $l(k-1)+1\geq n-k+1$, that is, $l= \lceil\tfrac{n-k}{k-1}\rceil$, and set $c_i = \lfloor \frac{1}{l}{n-i \choose k-1} \rfloor$ for $1\le i \le n-k+1$. Since $k < n-k+1$, by Lemma~\ref{lem:PartitionA_i}, for $2 \le i \le k$, we can partition $\A_i(n,k)$ into $\A_{ij}^l(n,k)$, $1 \le j \le c_i$, each with size $l$, together with $\A_{i, c_i+1}^l(n,k)$ with size ${n-i \choose k-1} - {c_i}l < l$ when ${n-i \choose k-1}$ is not divisible by $l$, such that \eqref{eq:min} holds for $1 \le j \le c_i$. Since $l(k-1)+1\geq n-k+1$, by \eqref{eq:min}, we have $|\cup_{A \in \A_{ij}^l(n,k)} A| \ge \min\{n-i+1, l(k-1)+1\} \ge \min\{n-i+1, n-k+1\} = n-k+1$ for $2 \le i \le k$ and $1 \le j \le c_i$. This implies that each member of $\binom{[n]}{k}$ has a non-empty intersection with $\cup_{A \in \A_{ij}^l(n,k)} A$ and hence is adjacent to at least one member of $\A_{ij}^l(n,k)$ in $\overline{K}(n,k)$. In other words, $\A_{ij}^l(n,k)$ is a dominating set of $\overline{K}(n,k)$.

Set $d_i= \lfloor \frac{1}{l+1}{n-i \choose k-1}\rfloor$ for $2 \le i \le k$. Then $d_i \le c_i$ and $|\A_i(n,k) - \cup_{j=1}^{d_i} \A_{ij}^l(n,k)| = {n-i \choose k-1} - d_{i} l = {n-i \choose k-1} - d_{i} (l+1) + d_{i} \ge d_i$. So we can take $d_i$ distinct members of $\A_i(n,k) - \cup_{j=1}^{d_i} \A_{ij}^l(n,k)$, say, $A_1, A_2, \ldots, A_{d_i}$. For $1 \le j \le d_i$, construct the star bag in $\overline{K}(n,k)$ with $A_j$ as its centre and members of $\A_{ij}^l(n,k)$ as its leaf vertices. This star is in $\overline{K}(n,k)$ as $\A_i(n,k)$ is a clique of $\overline{K}(n,k)$. Thus, we get $d_i$ star bags in $\overline{K}(n,k)$ for each $2 \le i \le k$. Now we add each member of $\A_1(n,k)$ as a single-vertex bag. In this way we have constructed $t_1(n,k) = |\A_1(n,k)| + \sum_{i=2}^k d_i$ bags in total. Since $\A_1(n,k)$ is a clique of $\overline{K}(n,k)$ and each $\A_{ij}^l(n,k)$ is a dominating set of $\overline{K}(n,k)$, these bags give rise to a strongly 1-shallow complete minor of $\overline{K}(n,k)$ with order $t_1(n,k)$. We have
	\begin{eqnarray}
		t_1(n,k) & = & |\A_1(n,k)| + \sum_{i=2}^k d_i \nonumber \\ 
		& =   & {n-1 \choose k-1} + \sum_{i=2}^k \left\lfloor \frac{{n-i \choose k-1}}{l+1}\right\rfloor \nonumber \\
		& \ge & {n-1 \choose k-1} + \frac{1}{l+1} \sum_{i=2}^k {n-i \choose k-1} - 
		(k-1)\frac{l}{l+1} \nonumber \\ 
		& =  & {n-1 \choose k-1} + \frac{1}{l+1}\left({n-1 \choose k} -{n-k \choose 
			k}\right) -(k-1)\frac{l}{l+1}. 
		\label{eq:t<k-s}
	\end{eqnarray}

	\subsection{Construction 2} 

Choose $l=\lceil \frac{n-1}{2(k-1)}\rceil$ and set $c_i = \lfloor \frac{1}{l}{n-i \choose k-1} \rfloor$ for $1\le i \le n-k+1$. Since $\lceil \frac{n}{2} \rceil \le \frac{n+1}{2} < n-k+1$, by Lemma~\ref{lem:PartitionA_i}, for $1 \le i \le \lceil \frac{n}{2} \rceil$, we can partition $\A_i(n,k)$ into $\A_{ij}^l(n,k)$, $1 \le j \le c_i$, each with size $l$, together with $\A_{i, c_i+1}^l(n,k)$ with size ${n-i \choose k-1} - {c_i}l < l$ when ${n-i \choose k-1}$ is not divisible by $l$, such that \eqref{eq:min} holds for $1 \le j \le c_i$. By our choice of $l$, we have $l(k-1)+1 \geq \frac{n+1}{2}$. Thus, by \eqref{eq:min}, we have $|\cup_{A \in \A_{ij}^l(n,k)} A| \ge \min\{n-i+1, l(k-1)+1\} \ge \min\left\{n-i+1, \frac{n+1}{2}\right\} = \frac{n+1}{2}$ for $1 \le i \le \lceil \frac{n}{2} \rceil$ and $1 \le j \le c_i$.

Set $d_i= \lfloor \frac{1}{l+1}{n-i \choose k-1}\rfloor$ for $1 \le i \le \lceil \frac{n}{2} \rceil$. As with Construction 1, we have $d_i \le c_i$ and $|\A_i(n,k) - \cup_{j=1}^{d_i} \A_{ij}^l(n,k)|  \ge d_i$. So we can take $d_i$ distinct members $A_{i1}, A_{i2}, \ldots, A_{i d_i}$ of $\A_i(n,k) - \cup_{j=1}^{d_i} \A_{ij}^l(n,k)$. For $1 \le j \le d_i$, construct the star bag in $\overline{K}(n,k)$ with $A_{ij}$ as its centre and members of $\A_{ij}^l(n,k)$ as its leaf vertices. This star is in $\overline{K}(n,k)$ as $\A_i(n,k)$ is a clique of $\overline{K}(n,k)$. In this way we have constructed $t_2(n,k) = \sum_{i=1}^{\lceil n/2 \rceil} d_i$ star bags in total. Note that for any (not necessarily distinct) $i, p$ with $1 \le i, p \le \lceil \frac{n}{2} \rceil$ and any (not necessarily distinct) $j, q$ with $1 \leq j \leq d_i, 1 \leq q \leq d_{p}$ such that $(i, j) \ne (p, q)$, we have $|(\cup_{A \in \A_{ij}^l(n,k)} A) \cap (\cup_{A \in \A_{pq}^l(n,k)} A)| = |\cup_{A \in \A_{ij}^l(n,k)} A| + |\cup_{A \in \A_{pq}^l(n,k)} A| - |(\cup_{A \in \A_{ij}^l(n,k)} A) \cup (\cup_{A \in \A_{pq}^l(n,k)} A)| \ge \frac{n+1}{2} + \frac{n+1}{2} - n = 1$. Thus, $(\cup_{A \in \A_{ij}^l(n,k)} A) \cap (\cup_{A \in \A_{pq}^l(n,k)} A) \neq \emptyset$ and consequently there exists an edge of $\overline{K}(n,k)$ joining a leaf vertex of the star bag centred at $A_{ij}$ and a leaf vertex of the star bag centred at $A_{pq}$. Therefore, these $t_2(n,k)$ star bags produce a strongly 1-shallow complete minor of $\overline{K}(n,k)$ with order $t_2(n,k)$. We have
	\begin{align}
		t_2(n,k)= & \sum_{i=1}^{\lceil n/2 \rceil} \left\lfloor \frac{{n-i \choose k-1}}{l+1}\right\rfloor \nonumber \\
		\geq & \sum_{i=1}^{\lceil n/2\rceil} \frac{{n-i \choose k-1}}{l+1} - \frac{l}{l+1} \left\lceil\frac{n}{2}\right\rceil \nonumber \\
		= & \frac{1}{l+1}\left({n \choose k}-{n-\lceil\frac{n}{2}\rceil \choose k}\right)-\frac{l}{l+1}\left\lceil\frac{n}{2}\right\rceil \nonumber \\
		= & \frac{1}{l+1}\left({n \choose k}-{\lfloor\frac{n}{2}\rfloor \choose k}\right)-\frac{l}{l+1}\left\lceil\frac{n}{2}\right\rceil. \label{eq:t2}
	\end{align}
	
	\subsection{Construction 3}

We assume $s\leq k$ for this construction. We first treat each member of $\A_1(n,k)$ as a single-vertex bag and then construct a number of star bags in the subgraph $\overline{K}(n-1, k)$ (with ground set $\{2, 3, \ldots, n\}$) of $\overline{K}(n, k)$. Since $n = sk+r$ where $1 \le r \le k-1$, the independence number of $\overline{K}(n, k)$ is $s$. Since $r \geq 1$, we have $n-1 = sk+(r-1)$ with $0 \le r-1 < k-1$ and hence the independence number of $\overline{K}(n-1, k)$ is also $s$. Moreover, any maximum independent set $\I = \{A_1, A_2, \ldots, A_s\}$ of $\overline{K}(n-1, k)$ is a dominating set of $\overline{K}(n,k)$ as $\cup_{j=1}^s A_j$ being an $sk$-subset of $[n]$ has a non-empty intersection with every $k$-subset of $[n]$. To construct a star bag from $\I = \{A_1, A_2, \ldots, A_s\}$, we look for a vertex of $\overline{K}(n-1, k)$ which is adjacent to all members of $\I$ in $\overline{K}(n-1, k)$. Since such a vertex is a $k$-subset of $\{2, 3, \ldots, n\}$ which has a non-empty intersection with each of the $s$ pairwise disjoint sets $A_1, A_2, \ldots, A_s$, its existence requires $s \leq k$. A vertex of $\overline{K}(n-1, k)$ fails this condition if and only if it is disjoint from at least one of these $s$ sets; the number of such vertices depends on $n$ and $k$ only and is denoted by $\overline{d}(n,k)$. Note that there are ${n-k-1\choose k}$ $k$-subsets of $\{2, 3, \ldots, n\}$ which are disjoint from any given $A_i \in \I$. Since there are $s$ members in $\mathcal{I}$, it follows that $\overline{d}(n,k)\leq s{n-k-1 \choose k}$.
	
We now choose $m$ pairwise disjoint maximum independent sets $\I_1, \I_2, \ldots, \I_m$ of $\overline{K}(n-1, k)$ such that $\overline{K}(n-1, k) - \cup_{i=1}^m \I_i$ has at least $m+\overline{d}(n,k)$ vertices, that is, ${n-1 \choose k} - ms \ge m+\overline{d}(n,k)$. The largest integer with this property is $m = \lfloor\frac{1}{s+1}\left( {n-1 \choose k} - \overline{d}(n,k)\right)\rfloor$. By this choice of $m$ and the definition of $\overline{d}(n,k)$, there exist $m$ distinct vertices $B_1, B_2, \ldots, B_m$ of $\overline{K}(n-1, k)$ such that $B_i$ is adjacent to all members of $\I_i$ in $\overline{K}(n-1, k)$, for $1 \le i \le m$. So we can construct a star bag with centre $B_i$ and leaf vertices the members of $\I_i$, for $1 \le i \le m$. Along with the ${n-1\choose k-1}$ single-vertex bags from $\A_1(n,k)$, we have constructed $t_3(n,k) = {n-1\choose k-1} + m$ bags. Since $\A_1(n,k)$ is a clique of $\overline{K}(n, k)$ and each $\I_i$ is a dominating set of $\overline{K}(n,k)$, these bags produce a strongly 1-shallow complete minor of $\overline{K}(n, k)$ with order $t_3(n,k)$. We have
\begin{align}
t_3(n,k) = & {n-1\choose k-1} +\left\lfloor\frac{1}{s+1}\left( {n-1 \choose k} - \overline{d}(n,k)\right)\right\rfloor \nonumber \\
\ge & {n-1 \choose k-1}+\left\lfloor \frac{1}{s+1}\left({n-1 \choose k} -s{n-k-1 \choose k}\right) \right\rfloor \label{eq:t3} \\
\ge & {n-1 \choose k-1}+\frac{1}{s+1}\left({n-1 \choose k} -s{n-k-1 \choose k}\right) - \frac{s}{s+1}. \label{eq:t3a}
\end{align}

	\section{Proof of \Cref{thm:OddHadwigerForCompKneser}}
	\label{sec:pf}
	
	Recall that $n=sk+r$ where $s \ge 2$, $k \ge 3$ and $1\leq r\leq k-1$. In view of Lemma \ref{lem:strong1shallow} and its proof, to prove the first two statements in Theorem \ref{thm:OddHadwigerForCompKneser}, it is enough to show that for each pair of positive integers $(n, k)$ with $n \ge 2k$, one of $t_1(n,k)$, $t_2(n,k)$ and $t_3(n,k)$ is at least as big as $\chi (\overline{K}(n,k)) = \left\lceil {n \choose k}/\lfloor \frac{n}{k}\rfloor \right\rceil$. We achieve this by proving the following five lemmas.

	\begin{lemma}
		\label{lem:t1}
		We have $t_1(n,k)\geq \chi(\overline{K}(n,k))$  for $1\leq r \leq k-s$. Moreover, for $1 \leq r \leq k-2$, we have
\begin{equation}
\label{eq:diff1}
h_o(\overline {K}(2k+r,k))-\chi(\overline{K}(2k+r,k)) \geq \frac{1}{12}(1.5^{k-1}-8k-2).
\end{equation}
	\end{lemma}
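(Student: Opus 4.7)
The plan is to combine the explicit lower bound \eqref{eq:t<k-s} on $t_1(n,k)$ from Construction~1 with the Baranyai upper bound $\chi(\overline{K}(n,k))\le\binom{n}{k}/s+(s-1)/s$ (Lemma~\ref{lem:Baranyai}) and the Pascal identity $\binom{n}{k}=\binom{n-1}{k-1}+\binom{n-1}{k}$. The preliminary step is to pin down $l=\lceil(n-k)/(k-1)\rceil$: writing $n-k=(s-1)(k-1)+(s-1+r)$, the hypothesis $1\le r\le k-s$ is equivalent to $0<s-1+r\le k-1$, which forces $l=s$.

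For the first statement, substituting $l=s$ into \eqref{eq:t<k-s}, subtracting the chromatic bound, and regrouping via Pascal's identity gives
\[
t_1(n,k)-\chi(\overline{K}(n,k))\ \ge\ \frac{s-1}{s}\binom{n-1}{k-1}-\frac{\binom{n-1}{k}}{s(s+1)}-\frac{\binom{n-k}{k}}{s+1}-\frac{s(k-1)}{s+1}-\frac{s-1}{s}.
\]
Using $\binom{n-1}{k}=\binom{n-1}{k-1}(n-k)/k$ with $n-k=(s-1)k+r$, the two leading binomial terms merge into $\binom{n-1}{k-1}\cdot(s(s-1)k-r)/[s(s+1)k]$, which is strictly positive since $s(s-1)k\ge 2k>k-1\ge r$. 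An elementary binomial monotonicity gives $\binom{n-k}{k}\le\binom{n-1}{k-1}$ with room to spare, so the positive leading term dominates the remaining additive error $s(k-1)/(s+1)+(s-1)/s$, delivering $t_1(n,k)\ge\chi(\overline{K}(n,k))$.

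For the second statement, specializing to $s=2$, $l=2$, $n=2k+r$ with $1\le r\le k-2$ reduces the same inequality to
\[
12\bigl(t_1(2k+r,k)-\chi(\overline{K}(2k+r,k))\bigr)\ \ge\ \frac{2(2k-r)}{k}\binom{2k+r-1}{k-1}-4\binom{k+r}{k}-8(k-1)-6,
\]
and via $(2k-r)/k\ge 1$ it suffices to show $2\binom{2k+r-1}{k-1}-4\binom{k+r}{k}\ge 1.5^{k-1}-4$ for every $r$ in range. The crucial input is the factorization
\[
\frac{\binom{2k+r-1}{k-1}}{\binom{k+r}{k}}\ =\ \frac{k}{r+1}\prod_{i=1}^{k-1}\frac{k+r+i}{r+i+1},
\]
together with the observation that each factor $(k+r+i)/(r+i+1)\ge 3/2$ whenever $i\le 2k-r-3$, a condition met for every $i\in\{1,\ldots,k-1\}$ in our range of $r$. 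This yields $\binom{2k+r-1}{k-1}\ge 1.5^{k-1}\binom{k+r}{k}$, from which $12(t_1-\chi)\ge(2\cdot 1.5^{k-1}-4)\binom{k+r}{k}-8k+2\ge 1.5^{k-1}-8k-2$ follows for $k\ge 3$ using $\binom{k+r}{k}\ge 1$ and $2\cdot 1.5^{k-1}\ge 4$. Lemma~\ref{lem:strong1shallow} then converts this into the stated gap for $h_o$.

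The main obstacle, in my estimation, is not the exponential estimate but the algebraic bookkeeping in the first statement: the hypothesis $r\le k-s$ must be invoked at precisely the right moment so that the merged binomial coefficient $s(s-1)k-r$ stays positive uniformly over $s\ge 2$ (the slack shrinks as $r$ approaches $k-s$). The ratio analysis behind the second statement is by contrast quite clean once $l=2$ is pinned down, since every one of the $k-1$ factors in the product delivers a crisp $3/2$ throughout the range.
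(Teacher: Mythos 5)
Your reduction to the merged inequality
\[
\binom{n-1}{k-1}\cdot\frac{s(s-1)k-r}{s(s+1)k}\ \ge\ \frac{\binom{n-k}{k}}{s+1}+\frac{s(k-1)}{s+1}+\frac{s-1}{s}
\]
is a correct and reasonable starting point, close in spirit to the paper's inequality~\eqref{eq:prop9}/\eqref{equ:T2} (the paper avoids the extra $(s-1)/s$ term by observing that $t_1$ is an integer, so it suffices to beat $\binom{n}{k}/s$ rather than $\binom{n}{k}/s+(s-1)/s$, but that is a minor efficiency). Your treatment of the gap bound \eqref{eq:diff1} is also correct and essentially the paper's argument: write the ratio $\binom{2k+r-1}{k-1}/\binom{k+r}{k}$ as a telescoping product, note each factor is $\ge 3/2$ when $r\le k-2$, and push the exponential through the additive error terms.

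However, the proof of the \emph{first} statement has a genuine gap. The step ``an elementary binomial monotonicity gives $\binom{n-k}{k}\le\binom{n-1}{k-1}$ with room to spare, so the positive leading term dominates'' does not work as a proof, and in fact the naive comparison fails for $s=2$. For the leading term to dominate $\binom{n-k}{k}/(s+1)$ via $\binom{n-k}{k}\le\binom{n-1}{k-1}$ alone, you would need
\[
\frac{s(s-1)k-r}{s(s+1)k}\ \ge\ \frac{1}{s+1},
\]
which simplifies to $sk(s-2)\ge r$ and is \emph{false} for $s=2$ (and equality-boundary for $s=3$). So when $s=2$, the leading coefficient $\frac{2k-r}{6k}$ is strictly smaller than $\frac{1}{3}$, and the inequality cannot follow from $\binom{n-k}{k}\le\binom{n-1}{k-1}$: one actually needs the much stronger fact that $\binom{n-k}{k}$ is exponentially smaller than $\binom{n-1}{k-1}$ (precisely the $(3/2)^{k-1}$ estimate you do carry out for the second statement). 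The paper handles this by splitting into Cases $s=2$, $s=3$, and $s\ge 4$; the exponential ratio argument is needed for $s=2$ and $s=3$, while a cruder linear estimate ($\frac{r}{sk}<\frac14$) suffices for $s\ge 4$. Your proposal as written asserts the dominance but never proves it, and the assertion is not deducible from the stated tools; you must import the product-ratio estimate (or an equivalent case split) into the proof of the first statement rather than only into the second.

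A second, smaller point: your interest in the monotonicity ``the slack shrinks as $r$ approaches $k-s$'' suggests you noticed the issue, but that remark does not substitute for the missing quantitative estimate; it is exactly where the proof has to do real work.
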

	
	\begin{proof} 
		Recall from \eqref{eq:t<k-s} that 
		$$
		t_1(n,k) \geq {n-1 \choose k-1} + \frac{1}{l+1}\left({n-1 \choose k} -{n-k \choose 
			k}\right) -(k-1)\frac{l}{l+1}.
		$$ 
		Since $\chi(\overline{K}(n,k))=\lceil \frac{1}{s} {n \choose k} \rceil$ and we are working with integer values, to prove $t_1(n,k)\geq \chi(\overline{K}(n,k))$ it suffices to prove that 
		\begin{equation}
			\label{eq:prop9}
			{n-1 \choose k-1} + \frac{1}{l+1}\left({n-1 \choose k} -{n-k \choose 
				k}\right) -(k-1)\frac{l}{l+1} \geq \frac{1}{s} {n \choose k}.
		\end{equation}
		In Construction 1, we have chosen 
		$$l=\left\lceil\frac{n-k}{k-1}\right\rceil=\left\lceil\frac{sk+r-k}{k-1}\right\rceil
		=s+\left\lceil\frac{r-k+s}{k-1}\right\rceil.$$ 
		Since $r \leq k-s$ by our assumption, we have $r-k+s\leq 0$. On the other hand, $r\geq 1$ and $s\geq 2$, so  $r-k+s\geq 3-k$. Hence, $\lceil\frac{r-k+s}{k-1}\rceil=0$ and $l=s$. Since $l = s$, ${n \choose k} = \frac{n}{k}{n-1 \choose k-1}$ and $n = sk+r$, \eqref{eq:prop9} can be presented as
		$$
		\frac{1}{s+1}\left({n-1 \choose k} -{n-k \choose 
			k}\right) -(k-1)\frac{s}{s+1} \geq  \frac{r}{n-r}{n-1 \choose k-1}.
		$$
		Replacing ${n-1 \choose k}$ by $\frac{n-k}{k}{n-1 \choose k-1}$ and then multiplying both sides by $(s+1)$, \eqref{eq:prop9} can be restated as
		\begin{equation}
			\left(s-1-\frac{r}{sk}\right) {n-1 \choose k-1} \geq {n-k \choose k} +(k-1)s.
			\label{equ:T2}
		\end{equation} 
		
		To prove (\ref{equ:T2}), we consider the following three cases one by one.
		
		\paragraph{Case I. $s=2$.} In this case, we have $n=2k+r$ and (\ref{equ:T2}) can be rewritten as
		$$
		\left(1-\frac{r}{2k}\right) {2k+r-1 \choose k-1} \geq {k+r \choose k} +2(k-1)\label{s=2},
		$$
		or, equivalently, 
		$$
		(2k-r)(2k+r-1)(2k+r-2)\cdots(k+r+1) \geq 2(k+r)(k+r-1)\cdots(r+1)+4(k-1)k!.
		$$
		Since $k\geq 3$ and $r\geq 1$, we have $(k+r)(k+r-1)\cdots(r+1) \ge 4k!$. Replacing $4k!$ by $(k+r)(k+r-1)\cdots(r+1)$ on the right-hand side of the inequality above, it suffices to prove
		$$
		(2k-r)(2k+r-1)(2k+r-2)\cdots(k+r+1) \geq (k+1)(k+r)(k+r-1)\cdots(r+1),
		$$
		or, equivalently, 
		\begin{equation}
			\frac{(2k-r)}{k+1}\frac{(2k+r-1)}{k+r}\frac{(2k+r-2)}{k+r-1}\cdots\frac{(k+r+1)}{r+2} \geq (r+1). \label{k>=11}
		\end{equation}
		Since $r\leq k-2$, we have $\frac{2k-r}{k+1}>1$ and $\frac{k+r+i}{r+1+i}\geq \frac{3}{2}$ for $1\leq i \leq k-1$. So the left-hand side of \eqref{k>=11} is greater than $(\frac{3}{2})^{k-1} \ge k-1 \ge r+1$ for $k\geq 3$. Thus, \eqref{k>=11} holds for $k \ge 3$. Hence, we have $t_1(n,k)\geq \chi(\overline{K}(n,k))$ when $s=2$.

Now we prove \eqref{eq:diff1} before moving on to other cases. Since $s=2$, we have 	$n = 2k+r$, $1 \leq r \leq k-2$, $l = 2$, and 
$$
\chi(\overline{K}(2k+r,k))=\left\lceil \frac{1}{2} {2k+r \choose k} \right\rceil \leq  \frac{1}{2} \left({2k+r \choose k}+1\right).
$$ 
Also, by Construction 1, $h_o(\overline {K}(2k+r,k)) \ge t_1(2k+r,k)$, and $t_1(2k+r,k)$ is bounded from below by \eqref{eq:t<k-s}. Therefore, 
\begin{align*}
      &\ h_o(\overline {K}(2k+r,k)) - \chi(\overline{K}(2k+r,k)) \\
\ge &\ {2k+r-1 \choose k-1}+\frac{1}{3}\left({2k+r-1 \choose k}-{k+r \choose k}\right)-\frac{2}{3}(k-1)-\frac{1}{2}{2k+r \choose k} - \frac{1}{2} \\
=    &\ {2k+r-1 \choose k-1}+\frac{1}{3}\left(\frac{k+r}{k}{2k+r-1 \choose k-1}-{k+r \choose k}\right)-\frac{2}{3}(k-1)-\frac{2k+r}{2k}{2k+r-1 \choose k-1} - \frac{1}{2} \\
=    &\ \frac{2k-r}{6k}{2k+r-1 \choose k-1}-\frac{1}{3}{k+r \choose k}-\frac{2}{3}(k-1)-\frac{1}{2} \\
=    &\ \left(\frac{2k-r}{6(k+r)}\frac{2k+r-1}{k+r-1}\frac{2k+r-2}{k+r-2}\cdots\frac{k+r+1}{r+1}-\frac{1}{3}\right){k+r \choose k}-\frac{2}{3}(k-1)-\frac{1}{2} \\
\ge &\ \left(\frac{2k-r}{6(k+r)}\frac{2k+r-1}{k+r-1}\frac{2k+r-2}{k+r-2}\cdots\frac{k+r+1}{r+1}-\frac{1}{3}\right)-\frac{2}{3}(k-1)-\frac{1}{2} \\
\ge &\ \left(\frac{1}{12} \left(\frac{3}{2}\right)^{k-1}-\frac{1}{3}\right)-\frac{2}{3}(k-1)-\frac{1}{2} \\
=    &\ \frac{1}{12}\left(\left(\frac{3}{2}\right)^{k-1}-8k-2\right),
\end{align*}
as claimed in \eqref{eq:diff1}, where in the second last step we used the fact that $\frac{2k-r}{6(k+r)} \ge \frac{1}{12}$ and $\frac{k+r+i}{r+i} \ge \frac{3}{2}$ for $1 \le i \le k-1$.
		
		\paragraph{Case II. $s=3$.}  Since $1\leq r\leq k-3$, we have $k\geq 4$. If $k=4$, then $r=1$ and in this case we can easily verify that inequality (\ref{equ:T2}) holds. In what follows we assume $k\geq 5$. Since $n=3k+r$, (\ref{equ:T2}) is simplified to
		\begin{equation}
			\left(2-\frac{r}{3k}\right) {3k+r-1 \choose k-1} \geq {2k+r \choose k} +3(k-1), \label{s=3}
		\end{equation}
		or, equivalently, 		
		$$
		(6k-r)(3k+r-1)(3k+r-2)\cdots(2k+r+1) \geq 3(2k+r)(2k+r-1)\cdots(k+r+1)+9(k-1)k!.
		$$
		Since $k\geq 5$, we have $9(k-1)k!\leq (2k+r-1)(2k+r-2)\cdots(k+r+1)$. Hence it suffices to prove that 
		$(6k-r)(3k+r-1)(3k+r-2)\cdots(2k+r+1) \geq (6k+3r+1)(2k+r-1)(2k+r-2)\cdots(k+r+1).$ This is to say that
		\begin{equation}
			\frac{(6k-r)}{6k+3r+1}\frac{(3k+r-1)}{2k+r-1}\frac{(3k+r-2)}{2k+r-2}\cdots\frac{(2k+r+1)}{k+r+1} \geq 1.\label{k>=5}
		\end{equation}
		Since $k\geq 5$ and $r\leq k-3$, for the first fraction on the left-hand side we have $\frac{6k-r}{6k+3r+1}\geq \frac{5}{9}$, and each of the remaining $k-1$ fractions is at least $\frac{4}{3}$. Since $\frac{5}{9}(\frac{4}{3})^{k-1}\geq 1$ when $k\geq 5$, it follows that \eqref{k>=5} holds for $k\geq 5$. Therefore, we have proved that $t_1(n,k)\geq \chi(\overline{K}(n,k))$ when $s=3$.
		
		\paragraph{Case III. $s\geq 4$.}
		In this case, we have $\frac{r}{sk}< \frac{1}{4}$. Thus, to prove (\ref{equ:T2}), it suffices to show 
		$$
		\left(s-1-\frac{1}{4}\right) {n-1 \choose k-1} \geq {n-k \choose k}+(k-1)s,
		$$
		or, equivalently,		
		\begin{equation}
			k(4s-5)(n-1)(n-2) \cdots (n-k+1) \geq 4(n-k)(n-k-1)\cdots (n-2k+1)+ 4(k-1)sk!.
			\label{equ:S4}
		\end{equation}
		We can split the left-hand side of (\ref{equ:S4}) into $k(4s-5)(n-1)(n-2) \cdots (n-k+2)(n-k)$ and $k(4s-5)(n-1)(n-2) \cdots(n-k+2)$. So it suffices to prove that 
		$$
		k(4s-5)(n-1)(n-2) \cdots (n-k+2)(n-k)\geq 4(n-k)(n-k-1)\cdots (n-2k+1),
		$$ 
		$$
		k(4s-5)(n-1)(n-2) \cdots(n-k+2)\geq 4(k-1)sk!.
		$$
		The latter is true because each factor $(n-i)$ for $1 \le i \le k-2$ on the left-hand side of the inequality is at least $3k$. The former can be rewritten as
		$$
		\frac{n-2}{(n-k-1)}\cdot \frac{n-3}{(n-k-2)} \cdots\frac{n-k+2}{(n-2k+3)}\cdot \frac{k(4s-5)(n-1)}{4(n-2k+2)(n-2k+1)}\geq 1.
		$$		
		Clearly, each of the first $k-3$ fractions is no less than 1. So, to prove this inequality, it suffices to show that $k(4s-5)(n-1)\geq 4(n-2k+2)(n-2k+1)$. We observe that $\frac{n-2k+2}{k}\leq s-1$. Thus, it would be enough to prove that $(4s-5)(n-1)\geq 4(s-1)(n-2k+1)$. Replacing $n$ by $sk+r$, the inequality is simplied to $7sk+9\geq 8s+8k+r$. Since $r\leq k-s$, we only need to prove $7sk+9\geq 8s+8k+(k-s)$, which is valid because $s\geq 4$. 
	\end{proof}
	
	
	\begin{lemma}
		\label{lem:t2}
		We have $t_2(n,k)\geq \chi(\overline{K}(n,k))$ in the following cases:
		\begin{enumerate}[\rm (a)]
			\item $k\geq 4$ and $s\geq 7;$
			\item $k=3$ and $s\geq 17;$
			\item $k\geq 7$ and $s=6$.
		\end{enumerate}
	\end{lemma}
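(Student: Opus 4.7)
The plan is to reduce, in each of the three parameter regimes in the lemma, the desired inequality $t_2(n,k) \ge \chi(\overline{K}(n,k))$ to the clean algebraic statement
$$(s-l-1)\binom{n}{k} \ge s\binom{\lfloor n/2 \rfloor}{k} + sl\left\lceil \frac{n}{2}\right\rceil + s(l+1).$$
This reduction is obtained by combining the lower bound \eqref{eq:t2} for $t_2(n,k)$ with the upper bound $\chi(\overline{K}(n,k)) = \lceil \binom{n}{k}/s \rceil \le \binom{n}{k}/s + 1$ coming from Lemma \ref{lem:Baranyai}, and then clearing the denominator $s(l+1)$.

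Two uniform estimates drive everything. First, rewriting $\frac{n-1}{2(k-1)} = \frac{s}{2} + \frac{s+r-1}{2(k-1)}$ makes the exact value of $l = \lceil (n-1)/(2(k-1))\rceil$ accessible as a function of $k$, $s$, $r$, so that $s-l-1$ is computable in each case. Second, the telescoping identity
$$\frac{\binom{\lfloor n/2 \rfloor}{k}}{\binom{n}{k}} \;=\; \prod_{i=0}^{k-1}\frac{\lfloor n/2\rfloor-i}{n-i} \;\le\; \frac{1}{2^k}$$
holds because each factor is at most $1/2$ (the $i=0$ factor equals $\lfloor n/2\rfloor/n \le 1/2$, and the inequality $(n/2-i)/(n-i)<1/2$ is equivalent to $i>0$). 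This gives us firm control of the ratio that governs the right-hand side of the reduced inequality.

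Case (a) is the spacious regime: the bound $n-1 \le 2(s-2)(k-1)$ is equivalent to $s(k-2) \ge 2k - 2 + r - 3$, which follows routinely for $k\ge 4$ and $s \ge 7$, so $s-l-1 \ge 1$, while $s \cdot 2^{-k} \le 7/16$. The reduced inequality then becomes $\tfrac{9}{16}\binom{n}{k} \gtrsim sl\lceil n/2 \rceil + s(l+1)$, which is immediate since the right-hand side grows at most linearly in $n$ whereas $\binom{n}{k}$ grows at least quartically. Case (c) is similar: for $k\ge 7$ and $s=6$, the term $(r+5)/(2(k-1)) \le (k+4)/(2(k-1))<1$ forces $l=4$, whence $s-l-1 = 1$ and $s \cdot 2^{-k} \le 6/128$, so the comparison is again absurdly slack.

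The main obstacle, as in Lemma \ref{lem:t1}, is Case (b) where $k=3$, so $\binom{\lfloor n/2\rfloor}{k}/\binom{n}{k}$ is bounded only by $1/8$, and both sides of the reduced inequality are cubic in $n$ with comparable coefficients. Here my plan is to check first that $s\ge 17$ yields $s - l - 1 \ge (s-9)/4$ from the explicit formula $l = \lceil (3s+r-1)/4\rceil$, and then expand $\binom{n}{3}$ and $\binom{\lfloor n/2\rfloor}{3}$ as polynomials in $n$, splitting on the parity of $n$ to handle $\lfloor n/2\rfloor$. The resulting polynomial inequality in $n$ (with $s\ge 17$ and $r\in\{1,2\}$) has positive leading coefficient once $s \ge 17$, and the remaining lower-order terms can be absorbed by enlarging $n \ge 3s+1 \ge 52$; the numerics work out with a margin, as a single sample point such as $(n,k,s,r) = (52,3,17,1)$ already exhibits.
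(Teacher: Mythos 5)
Your reduction to the inequality $(s-l-1)\binom{n}{k} \ge s\binom{\lfloor n/2\rfloor}{k} + sl\lceil n/2\rceil + s(l+1)$ is essentially the paper's own reduction (the paper drops the extra $s(l+1)$ by appealing to $t_2$ being an integer, but your stronger target is still fine), and the clean bound $\binom{\lfloor n/2\rfloor}{k}/\binom{n}{k} \le 2^{-k}$ is a nice uniform replacement for the paper's term-by-term comparison of $\prod(n-2i)$ with $\prod(n-i)$. Case (c) goes through: with $s=6$ and $k\ge 7$ you correctly pin down $l=4$, get $s-l-1 = 1$ and $s\cdot 2^{-k} \le 6/128$, and the residual is indeed dominated by $\binom{n}{k}$.

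The genuine gap is in Case (a). You assert that $s-l-1 \ge 1$ and $s\cdot 2^{-k} \le 7/16$ so that the effective coefficient of $\binom{n}{k}$ is at least $9/16$. But $s\cdot 2^{-k}$ is \emph{unbounded} over the regime $s\ge 7$, $k\ge 4$: for example $s=100$, $k=4$ gives $s\cdot 2^{-k} = 6.25$. Your two constants $s-l-1\ge 1$ and $s\cdot 2^{-k}\le 7/16$ do not both hold simultaneously beyond the corner $(s,k)=(7,4)$, so the inequality
$$\bigl((s-l-1) - s\cdot 2^{-k}\bigr)\binom{n}{k} \ \ge\ sl\lceil n/2\rceil + s(l+1)$$
is not established. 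What actually saves the argument — and what the paper uses — is that $s - l - 1$ itself grows linearly in $s$; the paper proves $s-l-1 \ge \frac{(k-2)(s-6)}{2(k-1)}$ for $k\ge 5$, which dominates the (also linear-in-$s$) term $s\cdot 2^{-k}$ with margin to spare, and then treats $k=4$ separately by explicit computation in each residue class $r\in\{1,2,3\}$. You would need to replace your constant lower bound $s-l-1\ge 1$ with an $s$-dependent one and then carry the comparison with $s\cdot 2^{-k}$ through; with $k=4$ the two rates ($s\cdot \frac{k-2}{2(k-1)}=\frac{s}{3}$ versus $\frac{s}{16}$) are close enough that some explicit estimate for small $s$ is unavoidable, which is why the paper isolates $k=4$. (As a side note, your displayed equivalence for $n-1\le 2(s-2)(k-1)$ drops a factor: it should be $s(k-2) \ge 4k + r - 5$, not $2k+r-5$; the conclusion $l\le s-2$ is still true but the stated algebra is off.) Case (b) is only a sketch, but the plan — split on the parity of $n$, expand both cubics in $n$, check the leading coefficient becomes positive at $s=17$ — is the same as the paper's, so with the computations filled in it would be fine.
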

	
	\begin{proof}
		Recall that $l=\lceil\frac{n-1}{2(k-1)}\rceil$ in Construction 2. In view of \eqref{eq:t2}, to prove $t_2(n,k)\geq \chi(\overline{K}(n,k))=\left\lceil\frac{1}{s}\binom{n}{k}\right\rceil$, it suffices to prove that
		$$
		\frac{1}{l+1}\left({n \choose k}-{\lfloor\frac{n}{2}\rfloor \choose k}\right)-\frac{l}{l+1}\left\lceil\frac{n}{2}\right\rceil \geq \frac{1}{s} \binom{n}{k},
		$$	
		or, equivalently,
		\begin{equation}
			\label{ineq:cons2main}  
			\quad (s-l-1)\binom{n}{k} \geq s\binom{\lfloor\frac{n}{2}\rfloor}{k}+sl\left\lceil\frac{n}{2}\right\rceil.      
		\end{equation}
		
		\paragraph{Case I. $k\geq 5$.}

		We first notice that
		$$l=\left\lceil\frac{n-1}{2(k-1)}\right\rceil\leq \frac{sk+r+2k-4}{2(k-1)}\leq \frac{sk+3k-5}{2(k-1)}.$$	
		So
		\begin{align*}
			s-l-1 & \geq s-\frac{sk+3k-5}{2(k-1)}-1 \\ 
			& = \frac{2(s-1)(k-1)-sk-3k+5}{2(k-1)} \\ 
			& = \frac{sk-2s-5k+7}{2(k-1)} \\
			& = \frac{(k-2)(s-5)-3}{2(k-1)} \\
			& \geq\frac{(k-2)(s-6)}{2(k-1)},
		\end{align*}
		where the last inequality uses the fact that $k\geq 5.$
		
		It can be verified that the left-hand side of \eqref{ineq:cons2main} is no less than $\frac{(k-2)(s-6)}{2(k-1)}\binom{n}{k}$  while the right-hand side of \eqref{ineq:cons2main} is no more than $s\binom{\frac{n}{2}}{k}+\frac{sl(n+1)}{2}$. Thus, to prove \eqref{ineq:cons2main}, it suffices to show that
		$$
		\frac{(k-2)(s-6)}{2(k-1)}\binom{n}{k} \geq  s\binom{\frac{n}{2}}{k}+\frac{sl(n+1)}{2},
		$$
		or, equivalently, 
		$$
		\frac{(k-2)(s-6)}{2(k-1)}\cdot \frac{n(n-1)\cdots(n-k+1)}{k!} \geq  s\cdot\frac{\left(\frac{n}{2}\right)\left(\frac{n}{2}-1\right)\cdots\left(\frac{n}{2}-k+1\right)}{k!}+\frac{sl(n+1)}{2}.
		$$
		Multiplying both sides by $2^{k}(k-1)k!$, the latter inequality can be rewritten as
		\begin{equation*}
			(k-2)(s-6)2^{k-1} \prod_{i=0}^{k-1} (n-i) \geq (k-1)s \prod_{i=0}^{k-1} (n-2i)+2^{k-1}(k-1)k!sl(n+1).
		\end{equation*}
		By splitting the factor $(n-1)$ on the left-hand side into $(n-2k+2)+(2k-3)$, it would be enough to show the following two inequalities:
		\begin{equation}
			\label{ineq:cons2I1}
			(k-2)(s-6)2^{k-1} n(n-2k+2) \prod_{i=2}^{k-1} (n-i) \geq  (k-1)s \prod_{i=0}^{k-1} (n-2i)
		\end{equation}
		\begin{equation}
			\label{ineq:cons2I2}
			(k-2)(s-6)2^{k-1} n(2k-3) \prod_{i=2}^{k-1} (n-i) \geq 2^{k-1}(k-1)k!sl(n+1). 
		\end{equation}
		
		In fact, it is easy to see that
		$$
		(n-2k+2)\prod_{i=2}^{k-1} (n-i) \geq \prod_{i=1}^{k-1} (n-2i).
		$$
		So, to prove (\ref{ineq:cons2I1}), it suffices to show $ (k-2)(s-6)2^{k-1} n \geq  (k-1)sn$, or, equivalently,
		$$
		\frac{s}{s-6} \leq \frac{k-2}{k-1} \cdot 2^{k-1},
		$$
		which easily holds for $s\geq 7$ and $k\geq 5$ (one can see that the left-hand side is monotonically decreasing with $s$ and the right-hand side is monotonically increasing with $k$). Hence inequality (\ref{ineq:cons2I1}) holds. 
		
		Now we prove inequality (\ref{ineq:cons2I2}), which is equivalent to 
		$$(k-2)(s-6)n(2k-3)(n-2)(n-3)\cdots(n-k+1)\geq (k-1)k!sl(n+1).$$
		Note that
		$$l=\left\lceil\frac{n-1}{2(k-1)}\right\rceil\leq \frac{n-1+2k-3}{2(k-1)}\leq \frac{n+2k-4}{2(k-1)}.$$
		Thus, to prove \eqref{ineq:cons2I2}, it suffices to prove that 
		$$
		2(k-2)(s-6)n(2k-3)(n-2)(n-3)(n-4) \cdots (n-k+1) \geq k!s(n+2k-4)(n+1).
		$$
		We break this task into three pieces. First, it is easy to see
		$(n-5)(n-6)\cdots(n-k+1)\geq k(k-1)\cdots 7\cdot 6$ by comparing factors. Second, since $n\geq sk\geq 7k$, we see that $(n-2)(n-3)(n-4)\geq (n+2k-4)(n+1)$. Finally, since $k\geq 5$ and $s\geq 7$, we have $2(k-2)(2k-3)(s-6)n\geq 120s$. Putting these together, we obtain \eqref{ineq:cons2I2} immediately.

		\paragraph{Case II. $k=4$.}	
		\subparagraph{II.1. $n=4s+1$.}
		We note again an upper bound for $l$:
		$$
		l=\left\lceil\frac{4s+1-1}{6}\right\rceil=\left\lceil\frac{2s}{3}\right\rceil\leq \frac{2s+2}{3}.
		$$		
		Hence, it suffices to show that
		$$
		(s-l-1)\binom{4s+1}{4} \geq s\binom{\lfloor\frac{4s+1}{2}\rfloor}{4}+sl\left\lceil\frac{4s+1}{2}\right\rceil,
		$$
		which holds if
		$$
		\left(s-\frac{2s+2}{3}-1\right)\binom{4s+1}{4} \geq s\binom{2s}{4}+s\cdot\frac{2s+2}{3}\cdot(2s+1).
		$$
		The latter inequality is further reduced to
		$$
		\frac{s}{18}  \left(52 s^4-316 s^3+99 s^2-5 s-22\right)\geq 0,
		$$
		which holds for $s\geq 7.$
		
		\subparagraph{II.2. $n=4s+2$.}
		In this case, we have $l = \lceil\frac{4s+1}{6}\rceil$ and so the desired inequality \eqref{ineq:cons2main} becomes
		$$
		\left(s-\left\lceil\frac{4s+1}{6}\right\rceil-1\right)\binom{4s+2}{4} \geq s\binom{2s+1}{4}+s\left\lceil\frac{4s+1}{6}\right\rceil(2s+1).
		$$
		This is reduced to
		$$\frac{s(2s+1)}{6} \left(-4 \left(8 s^2+1\right) \left\lceil \frac{4s+1}{6} \right\rceil +30 s^3-29 s^2-3 s+2\right)\geq 0.$$
		One can verify this inequality for $s=7$ by directly plugging in $s=7$. For $s\geq 8$, we may replace the ceiling by $(4s+6)/6$ and then verify the simplified inequality straightaway.
		
		\subparagraph{II.3. $n=4s+3$.} In this case, we have $l = \lceil\frac{2s+1}{3}\rceil \leq\frac{2s+3}{3}$ and the desired inequality \eqref{ineq:cons2main} becomes
		$$
		(s-l-1)\binom{4s+3}{4} \geq s\binom{\lfloor\frac{4s+3}{2}\rfloor}{4}+sl\left\lceil\frac{4s+3}{2}\right\rceil,
		$$
		which holds if
		$$
		\left(s-\frac{2s+3}{3}-1\right)\binom{4s+3}{4} \geq s\binom{2s+1}{4}+s\cdot\frac{2s+3}{3}\cdot(2s+2).
		$$
The latter inequality is reduced to
		$$\frac{s}{18}\left(52 s^4-276 s^3-553 s^2-321 s-72\right)\geq 0,$$
		which holds for $s\geq 7.$

		\paragraph{Case III. $k=3$.}

		\subparagraph{III.1 $n=3s+1$.} In this case, we have $l = \lceil\frac{3s}{4}\rceil\leq\frac{3s+3}{4}$ and so the desired inequality \eqref{ineq:cons2main} becomes 
		$$
		(s-l-1)\binom{3s+1}{3} \geq s\binom{\lfloor\frac{3s+1}{2}\rfloor}{3}+sl\left\lceil\frac{3s+1}{2}\right\rceil,
		$$
		which holds if
		$$
		\left(s-\frac{3s+3}{4}-1\right)\binom{3s+1}{3} \geq s\binom{\frac{3s+1}{2}}{3}+s\cdot\frac{3s+3}{4}\cdot\frac{3s+2}{2}.
		$$
		The latter inequality is reduced to
		$$\frac{s}{16}  \left(9 s^3-135 s^2-31 s+1\right)\geq 0,$$
		which holds for $s\geq 16.$
		
		\subparagraph{III.2 $n=3s+2$.} In this case, we have $l=\lceil\frac{3s+1}{4}\rceil\leq\frac{3s+4}{4}$ and so the desired inequality \eqref{ineq:cons2main} becomes
		\begin{equation}
			\label{ineq:cons2III2}
			(s-l-1)\binom{3s+2}{3} \geq s\binom{\lfloor\frac{3s+2}{2}\rfloor}{3}+sl\left\lceil\frac{3s+2}{2}\right\rceil,
		\end{equation}	
		which holds if		
		$$
		\left(s-\frac{3s+4}{4}-1\right)\binom{3s+2}{3} \geq s\binom{\frac{3s+2}{2}}{3}+s\cdot\frac{3s+4}{4}\cdot\frac{3s+3}{2}.
		$$
		The latter inequality is reduced to
		$$\frac{s}{16} \left(9 s^3-144 s^2-178 s-56\right)\geq 0,$$
		which holds for $s\geq 18.$ In addition, one can directly verify (\ref{ineq:cons2III2}) for $s=17$.
		
		\paragraph{Case IV. $s=6$.}	
		In this case, we have $6k\leq n\leq 7k-1$ and 
		$$
		l=\left\lceil\frac{6k+r-1}{2(k-1)}\right\rceil\leq\frac{7k-1-1+2k-3}{2(k-1)}\leq\frac{9k-5}{2(k-1)}.
		$$ 
		To prove \eqref{ineq:cons2main}, it suffices to show
		$$
		(6-l-1)\binom{6k}{k} \geq 6\binom{\lfloor\frac{7k-1}{2}\rfloor}{k}+6l\left\lceil\frac{7k-1}{2}\right\rceil,
		$$
		which holds if
		$$
		\frac{k-5}{2(k-1)}\binom{6k}{k} \geq 6\binom{\frac{7k-1}{2}}{k}+6\cdot\frac{9k-5}{2(k-1)}\cdot\frac{7k}{2}.
		$$
		Multiplying both sides by $2^{k+1}(k-1)k!$, the latter is equivalent to
		\begin{align*}
			(k-5)2^k(6k)(6k-1)\cdots(5k+1)&\geq12(k-1)(7k-1)(7k-3)\cdots(5k+1)\\
			& \quad +21k(9k-5)2^kk!.
		\end{align*}
		To conclude the proof, we only need to show the following two inequalities:
		\begin{equation}
			\label{ineq:cons2IV1}
			(k-5)2^{k}(5k)\prod_{i=0}^{k-2}(6k-i) \geq 12(k-1)\prod_{i=0}^{k-1}(7k-2i-1)
		\end{equation}
		\begin{equation}
			\label{ineq:cons2IV2}
			(k-5)2^{k}\prod_{i=0}^{k-2}(6k-i) \geq 21k(9k-5)2^kk!.
		\end{equation}
		In fact, for $k\geq 7$, we have
		\begin{align*}
			(k-5)2^{k}(5k)\prod_{i=0}^{k-2}(6k-i) &=(k-5)\left(\frac{12}{7}\right)^{k}\left(\frac{7}{6}\right)^{k}(5k)\prod_{i=0}^{k-2}(6k-i)\\&\geq (k-5)\left(\frac{12}{7}\right)^{k}\prod_{i=0}^{k-1}(7k-2i-1)\\
			&\geq 12(k-1)\prod_{i=0}^{k-1}(7k-2i-1).
		\end{align*}
		Hence inequality \eqref{ineq:cons2IV1} holds. For $k\geq 6$, we have
		$$
		(k-5)2^{k}\prod_{i=0}^{k-2}(6k-i)  \geq (k-5)2^k6^{k-1}k! \geq 21k(9k-5)2^kk!.
		$$
		Hence inequality \eqref{ineq:cons2IV2} holds. 
	\end{proof}

	\begin{lemma}
		\label{lem:t3}
		Assuming $n=sk+r$ and $k-s+1\leq r \leq k-1$, we have $t_3(n,k)\geq \chi(\overline{K}(n,k))$ in each of the following cases:
		\begin{enumerate}[\rm (a)]
			\item $s=2$ (hence $r=k-1$);
			\item $s=3$, $k\geq 4$;
			\item $s=4$, $k\geq 7$;
			\item $s=4$, $k=6$, $k-3 \leq r \leq k-2$;
			\item $s=5$, $k\geq 9$;
			\item $s=5$, $k=8$, $r=4$.
		\end{enumerate}
Moreover, when $s = 2$, we have
\begin{equation}
\label{eq:diff2}
h_o(\overline {K}(3k-1,k))-\chi(\overline{K}(3k-1,k)) \geq \frac{1}{6}(1.5^{k-1}-11).
\end{equation}
	\end{lemma}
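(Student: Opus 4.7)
The plan is to start from the lower bound \eqref{eq:t3a} for $t_3(n,k)$ and combine it with the upper bound $\chi(\overline{K}(n,k))\leq\tfrac{1}{s}\binom{n}{k}+\tfrac{s-1}{s}$ from Lemma~\ref{lem:Baranyai} (using $\lfloor n/k\rfloor=s$). Applying Pascal's identity $\binom{n}{k}=\binom{n-1}{k-1}+\binom{n-1}{k}$ and $\binom{n-1}{k}=\tfrac{n-k}{k}\binom{n-1}{k-1}$ with $n=sk+r$, the desired inequality $t_3(n,k)\geq\chi(\overline{K}(n,k))$ reduces, after multiplication by $s(s+1)$ and collection of terms, to the single inequality
\[
\bigl(s(s-1)-\tfrac{r}{k}\bigr)\binom{n-1}{k-1}\;\geq\;s^2\binom{n-k-1}{k}+2s^2-1.
\]
Everything hinges on the ratio $\binom{n-1}{k-1}/\binom{n-k-1}{k}$, which I would estimate by writing it as $\tfrac{k\,(n-1)(n-2)\cdots(n-k+1)}{(n-k-1)(n-k-2)\cdots(n-2k)}$ and pairing factors; when $n$ is close to $sk$ each pair is approximately $s/(s-1)$, giving exponential growth in $k$.

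For the key case $s=2$ (so $r=k-1$ and $n=3k-1$), the reduced inequality becomes $\tfrac{k+1}{k}\binom{3k-2}{k-1}\geq 4\binom{2k-2}{k}+7$. Each factor pair $(2k+j)/(k+j)$ for $0\leq j\leq k-2$ is at least $3/2$ (since $2(2k+j)\geq 3(k+j)$ iff $k\geq j$), and together with the leading $k/(k-1)\geq 1$ this gives $\binom{3k-2}{k-1}/\binom{2k-2}{k}\geq 1.5^{k-1}$. Keeping track of constants rather than discarding them, direct computation of $t_3-\chi$ from \eqref{eq:t3a} (using $\binom{3k-2}{k}=\tfrac{2k-1}{k}\binom{3k-2}{k-1}$) yields
\[
t_3(3k-1,k)-\chi(\overline{K}(3k-1,k))\;\geq\;\tfrac{1}{6}\!\left(\tfrac{k+1}{k}\binom{3k-2}{k-1}-4\binom{2k-2}{k}-7\right),
\]
and then the ratio bound together with $\binom{2k-2}{k}\geq 1$ delivers $\tfrac{1}{6}(1.5^{k-1}-11)$ for $k\geq 5$; for $k\in\{3,4\}$ the target is negative, while item~(a) gives $t_3-\chi\geq 0$, so the bound holds trivially. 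This proves item~(a) and the quantitative gap \eqref{eq:diff2} in one stroke.

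The generic cases (b), (c), (e) proceed by the same scheme: the coefficient $s(s-1)-r/k$ grows like $s^2$, while the pairing estimate gives $\binom{n-1}{k-1}/\binom{n-k-1}{k}\geq\bigl((s+r/k)/(s-1+r/k)\bigr)^{k-1}$, which easily absorbs the $s^2\binom{n-k-1}{k}$ and $O(s^2)$ contributions once $k$ is above the stated threshold. The boundary cases (d) $s=4,k=6,r\in\{3,4\}$ and (f) $s=5,k=8,r=4$ comprise only finitely many pairs $(n,k)$, which I would verify by direct numerical evaluation of \eqref{eq:t3a} and Lemma~\ref{lem:Baranyai}; if the coarse estimate $\overline{d}(n,k)\leq s\binom{n-k-1}{k}$ turns out to be too lossy there, I would instead use \eqref{eq:t3} together with an inclusion-exclusion refinement of $\overline{d}(n,k)$. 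The main obstacle I anticipate is precisely these boundary cases: the asymptotic pairing becomes tight, so success hinges on either direct computation or a more careful accounting of $\overline{d}(n,k)$ rather than the clean universal argument used in the other cases.
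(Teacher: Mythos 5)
Your overall scheme is the same as the paper's: reduce $t_3\geq\chi$ via \eqref{eq:t3a} and Lemma~\ref{lem:Baranyai} to an inequality between binomial coefficients, estimate the ratio $\binom{n-1}{k-1}/\binom{n-k-1}{k}$ by pairing factors to obtain exponential growth in $k$, verify the finitely many boundary cases numerically (including the $(17,4)$ case via an inclusion--exclusion refinement of $\overline{d}(n,k)$, which is exactly the paper's Lemma~\ref{lem:17-4}), and extract the gap \eqref{eq:diff2} for $s=2$ by careful bookkeeping of constants. One harmless difference: you upper-bound $\chi\leq\tfrac{1}{s}\binom{n}{k}+\tfrac{s-1}{s}$, whereas the paper notes that $t_3$ is an integer and so it suffices to reach $\tfrac{1}{s}\binom{n}{k}$; as a result your reduced inequality has right-hand side $s^2\binom{n-k-1}{k}+2s^2-1$ rather than $s^2\binom{n-k-1}{k}+s^2$, costing you a few extra base cases.

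However, the stated ratio bound $\binom{n-1}{k-1}/\binom{n-k-1}{k}\geq\bigl((s+r/k)/(s-1+r/k)\bigr)^{k-1}$ is \emph{false} for $s\geq 3$, and this is precisely where cases (b), (c), (e) live. Writing the ratio as $\tfrac{k}{n-2k}\prod_{i=1}^{k-1}\tfrac{n-i}{n-k-i}$, each factor $\tfrac{n-i}{n-k-i}$ is indeed at least $\tfrac{n}{n-k}$, but the leading factor $\tfrac{k}{n-2k}=\tfrac{k}{(s-2)k+r}$ is strictly less than $1$ once $s\geq 3$ (roughly $\tfrac{1}{s-2}$), so you cannot ignore it. The paper's fix is to pair this troublesome factor with the coefficient $s(s-1)-r/k$: one gets $\tfrac{(s^2-s)k-r}{(s-2)k+r}\geq\tfrac{s^2-s-1}{s-1}$, and then bounds each remaining fraction by $\tfrac{s+1}{s}$, leading to the single inequality $\tfrac{s^2-s-1}{s-1}\left(\tfrac{s+1}{s}\right)^{k-1}\geq 2s^2$ (your version would carry $3s^2-1$ in place of $2s^2$). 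With that correction your plan goes through; without it, the exponential base is overestimated and the thresholds you would derive for (b), (c), (e) would be wrong.
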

	
\begin{proof} 
The claim to be proved is $$t_3(sk+r, k)\geq \chi(\overline{K}(sk+r,k))=\left\lceil\frac{1}{s}{sk+r \choose k}\right\rceil.$$
In view of \eqref{eq:t3a}, to prove $t_3(n,k)\geq \chi(\overline{K}(n,k))$ it would be enough to show that 
		$${sk+r-1 \choose k-1}+\frac{1}{s+1}\left({sk+r-1 \choose k} -s{(s-1)k+r-1 \choose k}\right)-\frac{s}{s+1}\geq \frac{1}{s}{sk+r \choose k}.$$
		Using the formulas ${sk+r-1 \choose k}=\frac{(s-1)k+r}{k}{sk+r-1 \choose k-1}$ and ${sk+r \choose k}=\frac{sk+r}{k}{sk+r-1 \choose k-1}$, this inequality can be rewritten as
		\begin{equation}
			\label{ineq:r>k-s+1}
			\frac{(s^2-s)k-r}{sk}{sk+r-1 \choose k-1} \geq  s{(s-1)k+r-1 \choose k}+s.
		\end{equation}
		Multiplying both sides by $sk!$, the inequality can be further rewritten as
		$$
		((s^2-s)k-r) \prod_{i=1}^{k-1} (sk+r-i) \ge s^2 \prod_{i=1}^{k} ((s-1)k+r-i) + s^2k!.
		$$
		Since $k! \le \prod_{i=1}^{k} ((s-1)k+r-i)$, it suffices to show that
		$$
		\frac{(s^2-s)k-r}{(s-2)k+r}\cdot\frac{(sk+r-1)}{ (s-1)k+r-1}\cdot\frac{(sk+r-2)}{(s-1)k+r-2}\cdots\frac{(s-1)k+r+1}{(s-2)k+r+1}\geq 2s^2.
		$$
		Since $k-s+1\leq r\leq k-1$, we have $\frac{(s^2-s)k-r}{(s-2)k+r}\geq \frac{s^2-s-1}{s-1}$ and, furthermore, $\frac{(s-1)k+r+i}{(s-2)k+r+i}\geq \frac{s+1}{s}$ for any $1\leq i\leq k-1$. Therefore, to prove the inequality above it would be enough to prove
		\begin{equation}
			\label{eq:s}
			\frac{s^2-s-1}{s-1}\left(\frac{s+1}{s}\right)^{k-1}\geq 2s^2.
		\end{equation}
		If $s=2$, the left-hand side of \eqref{eq:s} is $(\frac{3}{2})^{k-1}$ and $2s^2=8$, so the inequality holds for $k\geq 7$ (and $r=k-1$). For $k=3,4,5,6$ and $r=k-1$, we can verify inequality (\ref{ineq:r>k-s+1}) directly. If $s=3$, the left-hand side of \eqref{eq:s} is $\frac{5}{2}(\frac{4}{3})^{k-1}$ while $2s^2=18$, and the inequality holds for $k\geq 8$ and $k-2\leq r\leq k-1$. For $k=4, 5, 6, 7$ and $r=k-1, k-2$, once again we can verify inequality (\ref{ineq:r>k-s+1}) directly. 
		
		
		Similarly, for $s=4$, \eqref{eq:s} is reduced to $\frac{11}{3}(\frac{5}{4})^{k-1}>2s^2=32$ which is valid for any $k\geq 11$ and $k-3\leq r\leq k-1$. Once again for $k=7, 8, 9, 10$, $r=k-1, k-2, k-3$, and $k=6, r=k-2, k-3$, we can verify inequality~(\ref{ineq:r>k-s+1}) directly. 
		
		
		
		Finally, for $s=5$, \eqref{eq:s} is reduced to $\frac{19}{4}(\frac{6}{5})^{k-1}>2s^2=50$ which holds for any $k\geq 14$ and $k-4\leq r\leq k-1$. For $k=9, 10, 11, 12, 13$, $r=k-1, k-2, k-3, k-4$, and $(k, r) = (8, k-4)$, we can verify inequality~(\ref{ineq:r>k-s+1}) directly.
		
It remains to prove \eqref{eq:diff2} in the case $s= 2$. Since $s=2$, we have $r = k-1$, $n=3k-1$, and 
$$
\chi(\overline{K}(3k-1,k))=\left\lceil \frac{1}{2} {3k-1 \choose k} \right\rceil \leq  \frac{1}{2} \left({3k-1 \choose k}+1\right).
$$ 
Also, by Construction 3, $h_o(\overline {K}(3k-1,k)) \ge t_3(3k-1,k)$, and $t_3(3k-1,k)$ is bounded from below by \eqref{eq:t3a}. Thus, 
\begin{align*}
      &\ h_o(\overline {K}(3k-1,k)) - \chi(\overline{K}(3k-1,k)) \\
\ge &\ {3k-2 \choose k-1}+\frac{1}{3}\left({3k-2 \choose k}-2{2k-2 \choose k}\right)-\frac{2}{3}-\frac{1}{2}{3k-1 \choose k}-\frac{1}{2} \\
= &\ {3k-2 \choose k-1}+\frac{1}{3}\left(\frac{2k-1}{k}{3k-2 \choose k-1}-2{2k-2 \choose k}\right)-\frac{2}{3}-\frac{1}{2}\frac{3k-1}{k}{3k-2 \choose k-1}-\frac{1}{2} \\
= &\ \frac{1}{6} \frac{k+1}{k}{3k-2 \choose k-1}-\frac{2}{3}{2k-2 \choose k}-\frac{7}{6} \\
= &\ \left(\frac{1}{6}\frac{k+1}{k-1} \frac{3k-2}{2k-2}\frac{3k-3}{2k-3}\cdots\frac{2k}{k}-\frac{2}{3}\right){2k-2 \choose k}-\frac{7}{6} \\
\ge &\ \frac{1}{6}\frac{k+1}{k-1} \frac{3k-2}{2k-2}\frac{3k-3}{2k-3}\cdots\frac{2k}{k}-\frac{2}{3}-\frac{7}{6} \\
\ge &\ \frac{1}{6}\left(\left(\frac{3}{2}\right)^{k-1}-11\right),
\end{align*}
as desired by \eqref{eq:diff2}, where in the last step we used the fact that $\frac{2k+i-1}{k+i-1} \ge \frac{3}{2}$ for $1 \le i \le k-1$.
	\end{proof}
	
	We deal with the case $(n,k)=(17,4)$ separately in the following lemma as the precise value of $\overline{d}(17,4)$ is needed in this case.
	
	\begin{lemma}
		\label{lem:17-4}
		We have $t_3(17,4)\geq \chi(\overline{K}(17,4))$.
	\end{lemma}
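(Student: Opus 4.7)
The pair $(n,k)=(17,4)$ corresponds to $s=4$, $r=1$, which sits outside every sub-case of \Cref{lem:t3} (it violates $s=4, k\ge 7$ and $s=4, k=6$). The generic upper bound $\overline{d}(n,k)\le s\binom{n-k-1}{k}$ used in \eqref{eq:t3} becomes $\overline{d}(17,4)\le 4\binom{12}{4}=1980$, which, when substituted into the formula for $t_3$, yields only $560+\lfloor(1820-1980)/5\rfloor=528 < 595 = \chi(\overline{K}(17,4))$. Hence the general estimate is too weak and my plan is to compute $\overline{d}(17,4)$ exactly.

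The plan is as follows. First, I will record the relevant constants: $\chi(\overline{K}(17,4))=\lceil\binom{17}{4}/\lfloor 17/4\rfloor\rceil=\lceil 2380/4\rceil=595$ by \Cref{lem:Baranyai}, and $\binom{16}{3}=560$, $\binom{16}{4}=1820$. Next, since $n-1=16=4\cdot 4$, any maximum independent set $\I=\{A_1,A_2,A_3,A_4\}$ of $\overline{K}(16,4)$ is a partition of $\{2,3,\ldots,17\}$ into four $4$-subsets. By definition, $\overline{d}(17,4)$ counts the $4$-subsets of $\{2,\ldots,17\}$ that are disjoint from at least one $A_i$; crucially, because the $A_i$ partition the ground set, this count depends only on $n,k$ (not on the particular $\I$), so exact inclusion--exclusion is legitimate.

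Writing $D_i$ for the family of $4$-subsets of $\{2,\ldots,17\}$ disjoint from $A_i$, I will apply inclusion--exclusion on $|D_1\cup D_2\cup D_3\cup D_4|$. The key observation is that $A_{i_1}\cup\cdots\cup A_{i_j}$ is a set of size $4j$, so
$$
|D_{i_1}\cap\cdots\cap D_{i_j}|=\binom{16-4j}{4},
$$
which equals $\binom{12}{4}=495$, $\binom{8}{4}=70$, $\binom{4}{4}=1$, and $0$ for $j=1,2,3,4$ respectively. Hence
$$
\overline{d}(17,4)=\binom{4}{1}\cdot 495-\binom{4}{2}\cdot 70+\binom{4}{3}\cdot 1-\binom{4}{4}\cdot 0=1980-420+4=1564.
$$

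Finally, plugging this exact value into the definition of $t_3$ from Construction 3 gives
$$
t_3(17,4)=\binom{16}{3}+\left\lfloor\frac{1}{s+1}\left(\binom{16}{4}-\overline{d}(17,4)\right)\right\rfloor
=560+\left\lfloor\frac{1820-1564}{5}\right\rfloor=560+51=611,
$$
and $611\ge 595=\chi(\overline{K}(17,4))$, completing the proof. The only nontrivial step is the exact inclusion--exclusion computation of $\overline{d}(17,4)$; everything else is mechanical substitution into previously derived formulas.
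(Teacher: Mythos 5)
Your proof is correct and takes essentially the same approach as the paper: compute $\overline{d}(17,4)$ exactly by inclusion--exclusion (the paper also notes $\overline{d}(17,4)=4\binom{12}{4}-6\binom{8}{4}+4\binom{4}{4}=1564$), substitute into the formula for $t_3$ from Construction~3, and compare with $\chi(\overline{K}(17,4))=595$. Your write-up is just more explicit about why the inclusion--exclusion is exact and why the generic bound $\overline{d}(n,k)\le s\binom{n-k-1}{k}$ is insufficient here.
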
	
	
	\begin{proof} 
		By the inclusion-exclusion principle, we know that $\overline{d}(17,4)=4{12 \choose 4}-6{8 \choose 4}+4{4 \choose 4}$. 
		Thus, by \eqref{eq:t3}, we have $t_3(17,4)\geq{16 \choose 3}+\lfloor\frac{1}{5}({16 \choose 4}- 4{12 \choose 4}+6{8 \choose 4}-4{4 \choose 4})\rfloor=560+51=611\geq \chi(\overline{K}(17,4))=595$.		
	\end{proof}
	
	\begin{lemma}
		\label{lem:NotCoveredCases}
		We have $t_1(n,k)\geq \chi(\overline{K}(n,k))$ for all pairs $(n, k)$ not covered by Lemmas \ref{lem:t1}--\ref{lem:17-4}. 
	\end{lemma}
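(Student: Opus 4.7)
The plan is to enumerate the triples $(s, k, r)$ with $n = sk+r$, $s \geq 2$, $k \geq 3$, $1 \leq r \leq k-1$ that are left uncovered by Lemmas \ref{lem:t1}--\ref{lem:17-4}, and then verify the inequality $t_1(n, k) \geq \chi(\overline{K}(n,k))$ for each of them through Construction 1. A careful audit shows the uncovered pairs split into three blocks: (i) $k = 3$ with $3 \leq s \leq 16$ and $r \in \{1, 2\}$, contributing twenty-eight triples; (ii) $s \in \{4, 5, 6\}$ paired with $k \in \{4, 5, 6, 7\}$ for those $r$ in the window not handled by Lemma \ref{lem:t3}, contributing roughly thirty triples; and (iii) a short list of sporadic triples, namely $(s,k,r) \in \{(4,6,5),(5,8,5),(5,8,6),(5,8,7)\}$. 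In every uncovered case we have $r \geq k - s + 1$, so the value $l = \lceil (n-k)/(k-1) \rceil$ from Construction 1 strictly exceeds $s$, which is precisely why the analysis of Lemma \ref{lem:t1} cannot be reused verbatim.

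For each uncovered pair I would first compute $l$ explicitly and then evaluate
$$
t_1(n,k) = \binom{n-1}{k-1} + \sum_{i=2}^{k} \left\lfloor \frac{\binom{n-i}{k-1}}{l+1} \right\rfloor,
$$
comparing the result with $\chi(\overline{K}(n,k)) = \lceil \binom{n}{k}/s \rceil$ supplied by Lemma \ref{lem:Baranyai}. Because $k \leq 8$ throughout, the binomial coefficients are modest in size and each inequality reduces to a single arithmetic check. For the large block (i), I would first attempt a uniform treatment: specialising to $k = 3$ gives $l = s - 1 + \lceil (s+r-1)/2 \rceil$ and $\binom{n-i}{2}$ quadratic in $n$, so the gap $t_1(n,3) - \chi(\overline{K}(n,3))$ becomes a polynomial expression in $s$; if this polynomial is non-negative for all $s$ in the relevant range, the twenty-eight cases collapse into one closed-form inequality. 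Any values of $s$ for which the uniform bound is too weak, together with blocks (ii) and (iii), can then be dispatched via a short verification table listing $n$, $l$, $t_1(n,k)$, and $\chi(\overline{K}(n,k))$ side by side.

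The main obstacle is bookkeeping rather than mathematical depth: no new construction is required, but the discrete jumps in $l$ as $r$ and $s$ vary mean that the tabulation must be carried out carefully to avoid off-by-one errors. Once every entry is confirmed, the lemma follows, and, together with Lemmas \ref{lem:t1}--\ref{lem:17-4}, this completes the case analysis behind the first two assertions of Theorem \ref{thm:OddHadwigerForCompKneser}; the explicit lower bound on $h_o(\overline{K}(n,k)) - \chi(\overline{K}(n,k))$ has already been established within the proofs of Lemmas \ref{lem:t1} and \ref{lem:t3}.
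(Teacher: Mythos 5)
Your proposal is correct and takes essentially the same approach as the paper: enumerate the finitely many pairs $(n,k)$ left uncovered by Lemmas~\ref{lem:t1}--\ref{lem:17-4} (you identify all $64$ of them correctly, including the $28$ with $k=3$ and $3\le s\le 16$), then for each pair compute $t_1(n,k)=\binom{n-1}{k-1}+\sum_{i=2}^{k}\bigl\lfloor\binom{n-i}{k-1}/(l+1)\bigr\rfloor$ and $\chi(\overline{K}(n,k))=\lceil\binom{n}{k}/s\rceil$ and verify the inequality, exactly as in Table~\ref{tab:Summary}. The only divergence is your speculative suggestion of a uniform polynomial bound for the $k=3$ block, which you rightly hedge and fall back on tabulation; the paper simply tabulates all cases directly.
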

	
	\begin{proof}
		For all pairs $(n, k)$ not covered by Lemmas \ref{lem:t1}--\ref{lem:17-4}, we can directly compute the values of $t_1(n,k)$ and $\chi (\overline{K}(n,k))$ using \eqref{eq:t<k-s} and Lemma \ref{lem:Baranyai}, respectively. These values are given in Table~\ref{tab:Summary} from which we obtain $t_1(n,k)\geq \chi(\overline{K}(n,k))$ in all cases.
	\end{proof}
	
		{\small	
		\renewcommand{\arraystretch}{1.40} 
	
\begin{table}[ht]
	
	\begin{tabular}{|l||*{10}{c|}}\hline
		$(n,k)$
		&\makebox[2em]{(10,3)}&\makebox[2em]{(11,3)}
		&\makebox[2em]{(13,3)}&\makebox[2em]{(14,3)}&\makebox[2em]{(16,3)}&\makebox[2em]{(17,3)}&\makebox[2em]{(19,3)}&\makebox[2em]{(20,3)}&\makebox[2em]{(22,3)}&\makebox[2em]{(23,3)}\\\hline
		$t_1(n,k)$ &45&57&82&94&125&144&181&199&242&267\\\hline
		$ \chi(\overline{K}(n,k))$ &40&55&72&91&112&136&162&190&220&253\\\hline
	\end{tabular}
	
	\smallskip	
	\begin{tabular}{|l||*{10}{c|}}\hline
		$(n,k)$
		&\makebox[2em]{(25,3)}&\makebox[2em]{(26,3)}
		&\makebox[2em]{(28,3)}&\makebox[2em]{(29,3)}&\makebox[2em]{(31,3)}&\makebox[2em]{(32,3)}&\makebox[2em]{(34,3)}&\makebox[2em]{(35,3)}&\makebox[2em]{(37,3)}&\makebox[2em]{(38,3)}\\\hline
		$t_1(n,k)$ &316&340&395&426&487&517&584&621&694&730\\\hline
		$\chi(\overline{K}(n,k))$ &288&325&364&406&450&496&544&595&648&703\\\hline
	\end{tabular}
	
	\smallskip	
	\begin{tabular}{|l||*{10}{c|}}\hline
		$(n,k)$
		&\makebox[2em]{(40,3)}&\makebox[2em]{(41,3)}
		&\makebox[2em]{(43,3)}&\makebox[2em]{(44,3)}&\makebox[2em]{(46,3)}&\makebox[2em]{(47,3)}&\makebox[2em]{(49,3)}&\makebox[2em]{(50,3)}&\makebox[2em]{(18,4)}&\makebox[2em]{(19,4)}\\\hline
		$t_1(n,k)$ &809&852&937&979&1070&1119&1216&1264&908&1097\\\hline
		$\chi(\overline{K}(n,k))$ &760&820&882&946&1012&1081&1152&1225&765&969\\\hline
	\end{tabular}
	
		\smallskip	
	\begin{tabular}{|l||*{10}{c|}}\hline
		$(n,k)$
		
		&\makebox[2em]{(21,4)}&\makebox[2em]{(22,4)}&\makebox[2em]{(23,4)}&\makebox[2em]{(25,4)}&\makebox[2em]{(26,4)}&\makebox[2em]{(27,4)}&\makebox[2em]{(22,5)}&\makebox[2em]{(23,5)}&\makebox[2em]{(24,5)}&\makebox[2em]{(26,5)}\\\hline
		$t_1(n,k)$ &1491&1746&1969&2603&2891&3275&8344&10275&12524&17333\\\hline
		$\chi(\overline{K}(n,k))$ &1197&1463&1771&2109&2492&2925&6584&8413&10626&13156\\\hline
	\end{tabular}

	\smallskip	
	\begin{tabular}{|l||*{10}{c|}}\hline
		$(n,k)$
		
		&\makebox[2em]{(27,5)}&\makebox[2em]{(28,5)}&\makebox[2em]{(29,5)}&\makebox[2em]{(31,5)}&\makebox[2em]{(32,5)}&\makebox[2em]{(33,5)}&\makebox[2em]{(34,5)}&\makebox[2em]{(29,6)}&\makebox[2em]{(32,6)}\\\hline
		$t_1(n,k)$ &20585&24275&28442&36993&42610&48845&54095&137161&242203\\\hline
		$\chi(\overline{K}(n,k))$ &16146&19656&23751&28319&33562&39556&46376&118755&181239\\\hline
	\end{tabular}
		\smallskip
	\begin{tabular}{|l||*{11}{c|}}\hline
		$(n,k)$
		&\makebox[2em]{(33,6)}
		&\makebox[2em]{(34,6)}&\makebox[2em]{(35,6)}&\makebox[2em]{(37,6)}&\makebox[2em]{(38,6)}&\makebox[2em]{(39,6)}&\makebox[2em]{(40,6)}&\makebox[2em]{(41,6)}\\\hline
		$t_1(n,k)$ &288544&341740&402525&528430&613221&708581&815472&934909
		\\
		\hline
		$\chi(\overline{K}(n,k))$ &221514&268981&324632&387464&460114&543771&639730&749398
		\\
		\hline
	\end{tabular}
	
		\smallskip
	\begin{tabular}{|l||*{8}{c|}}\hline
		$(n,k)$
		&\makebox[2em]{(38,7)}&\makebox[2em]{(39,7)}&\makebox[2em]{(40,7)}
		&\makebox[2em]{(41,7)}&\makebox[2em]{(45,8)}&\makebox[2em]{(46,8)}&\makebox[2em]{(47,8)}\\\hline
		$t_1(n,k)$ &3419912&4082738&4849607&5733229&58124078&69186715&82011684\\\hline
		$\chi(\overline{K}(n,k))$ &2524052&3076188&3728712&4496388&43110639&52186563&62891499\\\hline
	\end{tabular}
	\caption{Cases not covered by Lemmas  \ref{lem:t1}--\ref{lem:17-4}.}
	\label{tab:Summary}
\end{table}
	}

	\medskip
	\Proof~\ref{thm:OddHadwigerForCompKneser}. 
The first two statements in Theorem \ref{thm:OddHadwigerForCompKneser} follow from Lemmas  \ref{lem:divisor}--\ref{lem:2}, Constructions 1--3, and Lemmas \ref{lem:t1}--\ref{lem:NotCoveredCases} with no further effort. The third statement follows from \eqref{eq:diff1} and \eqref{eq:diff2} immediately. 
	\qed

\bigskip
\noindent \textbf{Acknowledgments}~~~This work has received support under the program ``Investissement d'Avenir" launched by the French Government and implemented by ANR, with the reference ``ANR‐18‐IdEx‐0001" as part of its program ``Emergence". Meirun Chen is supported by Fujian Provincial Department of Science and Technology (2024J011197). Lujia Wang is supported by NSFC (No. 12371359). Sanming Zhou was supported by ARC
Discovery Project DP250104965.

\end{document}